\documentclass[11pt]{article}
\usepackage{lineno,hyperref,amsfonts,amsmath}
\usepackage{appendix}
\usepackage[numbers,sort&compress]{natbib}
\modulolinenumbers[5]
\numberwithin{equation}{section}
\textheight 21.5truecm \textwidth 14.5truecm
\usepackage{indentfirst}
\usepackage[a4paper,left=2.8cm,right=2.8cm,top=2.54cm,bottom=2.54cm]{geometry}%
\usepackage{amssymb}
\setcounter{page}{1}
\usepackage{latexsym}
\usepackage{graphicx} 
\usepackage{epstopdf}
\graphicspath{{figures/}}
\usepackage{mathrsfs}
\usepackage{bm}
\usepackage{cases}
\usepackage{dsfont}
\usepackage{tikz}
\usepackage{threeparttable}
\usepackage{subfigure}
\usepackage{float}
\usepackage{makecell,multirow,diagbox}
\usetikzlibrary{shapes,snakes}

\newcommand{\tnormtight}[2][]{\vert\!\vert\!\vert #2 \vert\!\vert\!\vert_{#1}}

\usepackage{amsthm}
\theoremstyle{plain}
\newtheorem{assumption}{Assumption}[section]
\newtheorem{theorem}{Theorem}[section]
\newtheorem{lemma}{Lemma}[section]

\numberwithin{equation}{section}

\numberwithin{figure}{section}

\numberwithin{table}{section}
\theoremstyle{definition}

\theoremstyle{remark} 
\newtheorem{re}{Remark}
\usepackage[]{caption2}

\usepackage{multirow} 
\usepackage{yhmath}
\usepackage{subfigure}
\usepackage{mathrsfs}
\usepackage{tabularx}
\usepackage[boxed,ruled,commentsnumbered]{algorithm2e}

\newcommand{\normmm}[1]{{\left\vert\kern-0.25ex\left\vert\kern-0.25ex\left\vert #1
		\right\vert\kern-0.25ex\right\vert\kern-0.25ex\right\vert}}
\usepackage{accents}

\begin{document}
	\title {A parallel solver for random input problems via Karhunen-Lo\`{e}ve expansion and diagonalized coarse grid correction}
    \author{
Dou Dai\thanks{School of Mathematics, Hunan University, Changsha 410082, China. Email: ddou1008@hnu.edu.cn}
\and
Qiuqi Li\thanks{School of Mathematics, Hunan University, Changsha 410082, China. Email: qiuqili@hnu.edu.cn}
\and
Huailing Song\thanks{School of Mathematics, Hunan University, Changsha 410082, China. Email: shling@hnu.edu.cn}
}
\date{}
	
	\maketitle
	\begin{center}
		\textbf{Abstract}
        \end{center}
        This paper is dedicated to enhancing the computational efficiency of traditional parallel-in-time methods for solving stochastic initial-value problems. The standard parareal algorithm often suffers from slow convergence when applied to problems with stochastic inputs, primarily due to the poor quality of the initial guess. To address this issue, we propose a hybrid parallel algorithm, termed KLE-CGC, which integrates the Karhunen-Lo\`{e}ve (KL) expansion with the coarse grid correction (CGC). The method first employs the KL expansion to achieve a low-dimensional parameterization of high-dimensional stochastic parameter fields. Subsequently, a generalized Polynomial Chaos (gPC) spectral surrogate model is constructed to enable rapid prediction of the solution field. Utilizing this prediction as the initial value significantly improves the initial accuracy for the parareal iterations. A rigorous convergence analysis is provided, establishing that the proposed framework retains the same theoretical convergence rate as the standard parareal algorithm. Numerical experiments demonstrate that KLE-CGC maintains the same convergence order as the original algorithm while substantially reducing the number of iterations and improving parallel scalability.

	\textbf{Key words} papareal; coarse-grid correction; Karhunen-Lo\`{e}ve (KL) expansion;
    generalized Polynomial Chaos;
	\section{Introduction}

Parametric partial differential equations (PDEs) represent fundamental mathematical models for complex physical systems characterized by uncertain or adjustable parameters,  such as material properties, boundary conditions, and external loads. These equations are crucial in fields including constrained optimization, feedback control formulations, and stochastic problems in uncertainty quantification. However, the efficient numerical solution of these equations is hindered by the so-called double curse of dimensionality. Firstly, high-dimensional parameter spaces cause the computational cost of traditional sampling methods like Monte Carlo to grow exponentially, constituting the parametric curse of dimensionality. Secondly, for time-dependent problems, simulating long-time evolution is constrained by the sequential nature of time integration. In such cases, the sequential requirement of time integration imposes a significant constraint. Although spatial parallelism can be exploited, the inherent serial nature of time advancement presents a fundamental bottleneck that ultimately limits achievable speedup.

To overcome the limitation in the temporal dimension, Lions et al. proposed the parareal algorithm \cite{J.L.L}. Its core idea is to decompose the global time interval $[0, T]$ into $N$ coarse subintervals with a large step-size $\Delta T$, enabling temporal parallelism. The algorithm then employs an iterative correction between two numerical propagators: a high-fidelity fine propagator $\mathcal{F}$ that advances the solution with a small step-size $\Delta t$ ($\Delta T / \Delta t = J \geq 2$) on each subinterval, and a low-cost coarse propagator $\mathcal{G}$ that operates on the coarse time grid with step-size $\Delta T$. However, a key limitation of the traditional parareal algorithm lies in its coarse grid correction (CGC) step, which relies on the sequential update of initial values on the coarse time grid, making CGC a critical bottleneck for parallel speedup \cite{M.J,S.L1}.

To gain a deeper understanding and optimize the algorithm, Gander and Vandewalle established a theoretical framework for the convergence analysis of the parareal algorithm in~\cite{M.J}. For a scalar linear model problem, they revealed a direct relationship between the algorithm's error contraction factor and the stability functions $\mathcal{R}_g$, $\mathcal{R}_f$ of the propagators $\mathcal{G}$ and $\mathcal{F}$:
\begin{equation}
\mathcal{K}(z,J) = \frac{|\mathcal{R}_g(z) - \mathcal{R}_f^J(z/J)|}{1 - |\mathcal{R}_g(z)|}, \quad \text{where } z = \Delta T \lambda.
\end{equation}
This result provides an important theoretical tool for subsequent research. Mathew et al.~\cite{T.R.M} analyzed the convergence when both $\mathcal{G}$ and $\mathcal{F}$ employ the backward Euler method for symmetric positive definite systems, proving a convergence factor $\rho \approx 1/3$. Wu~\cite{S.L2,S.L3} extended the convergence analysis to higher-order time integrators such as second- and third-order SDIRK methods, demonstrating that under certain conditions on the grid ratio $J$, the convergence factor can remain robust, independent of spatial and temporal discretization parameters.

To fundamentally overcome the serial bottleneck of CGC, Wu~\cite{S.L} proposed a parallel CGC method based on matrix diagonalization. The innovation lies in applying the coarse propagator $\mathcal{G}$ to a modified model:
\begin{equation}
u'(t) + Au(t) = f, \quad u(0) = \alpha u(T).
\end{equation}
By introducing the parameter $\alpha$ to couple the initial and final values, the originally block lower triangular CGC system matrix can be diagonalized, transforming the correction step into a fully parallel form. Theoretical analysis shows that when $|\alpha| \leq \alpha^*$, the convergence rate of this parallel CGC algorithm is consistent with the classical Parareal algorithm, and the condition number of the diagonalized matrix is $\mathcal{O}(1)$, independent of the time interval length $T$, thus effectively controlling round-off errors~\cite[Theorem 3.2, Lemma 2.2]{S.L}. This work provides key technical support for the practical and efficient application of parallel-in-time (PinT) algorithms.

The application scope of the parareal algorithm continues to expand. Addressing the challenging problem of time-dependent diffusion equations with fractional Laplacian operators, Wu~\cite{Ref2} conducted an in-depth analysis of an efficient parareal algorithm. This algorithm employs a third-order SDIRK method as the fine propagator $\mathcal{F}$ to ensure high accuracy, and innovatively adopts an implicit-explicit (IMEX) Euler method as the coarse propagator $\mathcal{G}$. Furthermore, the convergence analysis was successfully extended to time-periodic problems, demonstrating the flexibility of the Parareal algorithm in handling different boundary conditions. In addition to the deterministic approaches based on numerical analysis discussed above, recent advances in probabilistic numerical methods offer a new paradigm for accelerating Parareal convergence. Pentland et al.~\cite{Penk} proposed a stochastic parareal algorithm. This method departs from the traditional deterministic approach of propagating a single initial value. Instead, on each unconverged time subinterval, it constructs a probability distribution based on current iterative information and draws several candidate initial values from it. All these candidate values are then propagated in parallel using the expensive $\mathcal{F}$ propagator. Subsequently, the algorithm selects the sample that yields the most continuous trajectory in phase space between adjacent subintervals, thereby providing a superior initial guess for the prediction-correction step.

Beyond the diagonalization-based and probabilistic methods mentioned, other significant PinT methods have been developed, such as parallel full approximation scheme in space-time (PFASST) ~\cite{M.L.M}, multigrid reduction in time (MGRiT)~\cite{R.D.F,V.A.D}, and the adaptive parareal algorithm~\cite{Y.M}. These methods further extend the parallel capability and efficiency of PinT across various application scenarios. Although parallel CGC technology--whether based on diagonalization or stochastic sampling--significantly enhances temporal parallel capability, its iterative convergence speed remains strongly dependent on the quality of the initial guess. A poor initial guess increases the number of iterations, thereby offsetting some of the parallel gains.

For dimensionality reduction in the parameter space, the Karhunen-Loève (KL) expansion and the generalized Polynomial Chaos (gPC) method are two prominent techniques. The KL expansion achieves a low-dimensional representation by extracting dominant orthogonal modes from the covariance structure of parameterized random fields \cite{C.S}. The gPC method employs orthogonal polynomial bases to expand stochastic functions into a series of deterministic coefficients, thereby transforming stochastic problems into deterministic systems \cite{D.X1, J.S}. Early research primarily applied KL and gPC expansions to approximate parameterized coefficient fields \cite{R.G, D.X2}, providing an effective means of input reduction for uncertainty propagation. Although this approach offers significant value, its applicability in many-query scenarios--such as design optimization or parameter inversion--suffers from inherent limitations. Specifically, solving the full PDE system for each new parameter restricts computational efficiency, as it fails to fully leverage the reuse potential of precomputed information. Furthermore, controlling the propagation of approximation errors from the coefficient field through the subsequent PDE solution process necessitates careful management, increasing the complexity of overall error control.

A more direct and efficient strategy for many-query scenarios is to construct approximations directly from the solution field of the PDE. This approach aims to build a compact surrogate model of the solution manifold. The core insight of this work is to leverage this strategy not for full field reconstruction, but for a critical yet underexplored purpose: generating high-quality initial guesses for iterative PinT algorithms.

The main contribution of this work is a novel framework that synergistically integrates the diagonalization-based parallel CGC algorithm with a KL-gPC-based method for initial value approximation. Instead of starting the iterative CGC process with a generic initial guess, our framework utilizes a precomputed solution field library to construct a high-fidelity initial value for each new parameter query via the KL-gPC method. The workflow consists of two stages: first, the diagonalization-based parallel CGC algorithm is employed to efficiently generate a high-fidelity solution field library for a set of sampled parameters, leveraging temporal parallelism. Then, KL and gPC expansions are applied to this library to build a surrogate model that maps any new parameter to a high-quality initial guess for the CGC iteration. Since the quality of the initial value directly impacts the convergence rate, this tailored initial approximation can effectively reduce the number of iterations required, thereby achieving a synergistic optimization where temporal parallelism and improved iterative convergence compound to enhance overall efficiency.

The paper is structured as follows: Section \ref{Pre} introduces the mathematical formulation of parametric PDEs and the fundamentals of KL expansion and gPC reduction. Section \ref{KLE-C} provides a detailed exposition of the Parareal method, the diagonalization-based parallel CGC method, and the implementation of the proposed KL-gPC-enhanced initial value generation for the CGC algorithm. Section \ref{cov} presents an analysis of the proposed method, focusing on how the improved initial value influences convergence. Section \ref{nu} validates the efficiency and accuracy of the method through numerical examples, demonstrating the reduction in iteration count compared to the standard parallel CGC approach. Finally, Section \ref{con} concludes the paper and discusses future research directions.

	\section{Preliminaries and notations}\label{Pre}
    In this section, we present some preliminaries and notations for the rest of paper. 
    
    Let $\boldsymbol{\xi} = (\xi_1,\dots,\xi_{n_t})^{T}$ be a vector of random variables parameterizing the uncertainties of the input fields defined on a space $(\mathcal{S}, \mathcal{F}, \mathcal{P})$, where $\mathcal{S}$ is the set of events, $\mathcal{F}$ is the $\sigma$-algebra and $\mathcal{P}$ is the probability measure. We will assume that the entries of $\boldsymbol{\xi}$ are independent and identically distributed. 
     Let $\Omega\subset \mathbb{R}^d$ be a bounded domain with Lipschitz continuous boundary $\partial\Omega$. 
    The associated norm is given by $\|\cdot\|_{X(\Omega)}=\sqrt{(\cdot,\cdot)_{X(\Omega)}}$.
	
	Let $v(x,t;\boldsymbol{\xi}): \Omega\times[0,T]\times\mathcal{S}\rightarrow \mathbb{R}$ represents a real-valued random field and $P$ denotes the joint probability density function of
	$\boldsymbol{\xi}$. We define the Hilbert space  $L^2_P(\Xi)$ of the random variables with second-order moments as follows
	\begin{equation*}
		L^2_P(\Xi):=\{v: \boldsymbol{\xi}\in\Xi\mapsto v(\boldsymbol{\xi})\in \mathbb{R}; \quad \int_{\Xi}u(\boldsymbol{\xi})^2P(d\boldsymbol{\xi})<\infty\}.
	\end{equation*}
	The inner product of this space $L^2_P(\Xi)$ is given by
	\begin{equation*}
		(u,v)_{L^2_P(\Xi)}:=\int_{\Xi}u(\boldsymbol{\xi})v(\boldsymbol{\xi})P(d\boldsymbol{\xi}),
	\end{equation*}
	which induces the norm $\|v\|_{L^2}=\|v\|_{L^2_P(\Xi)}:=\sqrt{(v,v)_{L^2_P(\Xi)}}$.
	
Consider the following parameter-dependent dynamical systems: 
	\begin{equation}
		\left\{ 
		\begin{aligned}
			u'(x,t;\boldsymbol{\xi})&= f(u(x,t;\boldsymbol{\xi}),t;\boldsymbol{\xi}), \quad t\in[0,T],
			\\
			u(x,0;\boldsymbol{\xi})&= u_0(\boldsymbol{\xi}),
		\end{aligned}
		\right.
		\label{eq:psys}
	\end{equation}
	 where the flux $f$ and initial condition depend on some parameters $\boldsymbol{\xi}$ . The solution $ u(x,t;\boldsymbol{\xi})$ belongs to the state space $X(\Omega)$.
We remark that the fine-grid solution is considered as a reference solution in the paper.

Solving the PDE (\ref{eq:psys}) anew for each parameter leads to a prohibitive computational cost that scales exponentially with both the parameter dimension and the spatial-temporal grid density, thus failing to meet the efficiency requirements of many-query tasks like design optimization and uncertainty quantification. The pursuit of computational efficiency therefore centers on constructing a separable representation of the input-output relationship through variable separation—an approach justified by the intrinsic structure of the parameterized solution field
$u(t,\boldsymbol{\xi})$. Specifically, the field's evolution is often characterized by a few dominant modes, which exhibit commonality across parameters. The KL expansion exploits this structure to achieve variable separation, with the goal of extracting the orthogonal principal components of $u(t,\boldsymbol{\xi})$ in the spatial-temporal domain and constructing an approximation of the form
\begin{eqnarray}
\label{separating presentation}
u(x, t;\boldsymbol{\xi})\approx u_{M_Q}(x,t;\boldsymbol{\xi}):=\sum_{i=1}^{M_Q}\zeta_i(\boldsymbol{\xi})g_i(x,t),
\end{eqnarray}
where  $\zeta_i(\boldsymbol{\xi})$ only depends on $\boldsymbol{\xi}$ and $g_i(x,t)$ only depends on $x$ and $t$.

We assume that $X_{M_Q}(\Omega) \subset X(\Omega)$ is a finite dimensional subspace space, and $\{\Psi_i\}_{i=1}^{M_Q}$ is a set of basis functions for $X_{M_Q}(\Omega) $.
We want to find an approximation of $u(x, t;\boldsymbol{\xi})$ in $X_{M_Q}(\Omega)$ such that
\begin{eqnarray}
\label{output-approximation}
 \|u(x,t;\boldsymbol{\xi})-\sum_{j=1}^{M_Q} c_{j}\Psi_j(x,t;\boldsymbol{\xi}) \|_{X(\Omega)}\leq \delta,
\end{eqnarray}
where $\delta$ is a given threshold.

\subsection{Karhunen-Lo\`{e}ve expansion}\label{sec:kl_exp}

In this subsection, we introduce a KL expansion method of snapshots to get the approximation (\ref{output-approximation}).
Let $\Xi_{t}$ be a collection of a finite number of samples in $\Xi$ and the cardinality $|\Xi_{t}|=n_t$.
For $\forall ~\boldsymbol{\xi}\in \Xi_{t}$, we can split $u(x,t;\boldsymbol{\xi})$ into two parts, i.e.,
\[
u(x,t;\boldsymbol{\xi})=\bar{u}(x,t)+\tilde{u}(x,t;\boldsymbol{\xi}),
\]
where $\bar{u}(x,t):=E[u(x,t;\boldsymbol{\xi})]=\frac{1}{n_{t}}\sum_{i=1}^{n_{t}}u(x,t;\boldsymbol{\xi}_i)$ is the mean, and $\tilde{u}(x,t,\boldsymbol{\xi})=u(x,t;\boldsymbol{\xi})-\bar{u}(x,t)$ is a random fluctuating part.
To obtain $\tilde{u}(x,t;boldsymbol{\xi})$, we take a set of snapshots $\{\tilde{u}(x,t;\boldsymbol{\xi}_i)\}_{i=1}^{n_{t}}$ and compute a covariance matrixes $\textbf{C}$, whose entries can be defined by
\[
\textbf{C}_{n,m}:=\frac{1}{n_{t}}\bigg(\tilde{u}(x,t;\boldsymbol{\xi}_n),\tilde{u}(x,t;\boldsymbol{\xi}_m)\bigg)_{X(\Omega)}.
\]
Let $\{\hat{\lambda}_k,  \textbf{e}_k\}$  be the eigen-pairs (normalized)  of $\textbf{C}$, $1\leq k\leq n_{t}$. Set $(\textbf{e}_k)_j=e_k^{j}$, we define the functions
\begin{eqnarray}
\label{KLE basis}
g_k(x,t):=\frac{1}{\sqrt{\hat{\lambda}_k n_{t}}}\sum_{j=1}^{n_{t}}e_k^{j}\tilde{u}(x,t;\boldsymbol{\xi}_j).
\end{eqnarray}
It is easy to get $(g_k,g_l)_\mathcal{H}=\delta_{k,l}$, $1\leq k$, $l\leq n_{t}$. Then we have the following
\[
\tilde{u}(x,t;\boldsymbol{\xi})\approx \sum_{i=1}^{ n_{t}} \sqrt{\hat{\lambda}_i} \zeta_i(\boldsymbol{\xi}) g_i(x,t),
\]
where $\{\zeta_i(\boldsymbol{\xi})\}_{i=1}^{n_{t}}$  are given by
\begin{eqnarray}
\label{KLE parameter}
\begin{split}
\zeta_i(\boldsymbol{\xi}):= \frac{1}{\sqrt{\hat{\lambda}_i}} \big(\tilde{u}(\cdot,\cdot;\boldsymbol{\xi}),g_i\big)_{X(\Omega)}.
\end{split}
\end{eqnarray}
Thus we get the decomposition
\begin{eqnarray}\label{KLE Snapshots}
u(x,t;\boldsymbol{\xi})\approx \bar{u}(x,t)+\sum_{i=1}^{M_Q} \sqrt{\hat{\lambda}_i}\zeta_i(\boldsymbol{\xi}) g_i(x,t).
\end{eqnarray}

Since equation (\ref{KLE parameter}) involves $\tilde{G}(\cdot,\cdot,\boldsymbol{\xi})$, it cannot be directly employed to compute the functions $\{\zeta_i(\boldsymbol{\xi})\}_{i=1}^{n_{t}}$ for arbitrary parameter values $\boldsymbol{\xi}$. To overcome this, we adopt a least-squares approach using orthogonal polynomials to approximate $\{\zeta_i(\boldsymbol{\xi})\}_{i=1}^{n_{t}}$.

Let $\{\psi_i(\boldsymbol{\xi})\}_{i=1}^{P}$ denote the set of orthogonal polynomial basis functions defined over the parameter space of $\boldsymbol{\xi}$. These basis functions are ordered sequentially and arranged into a row vector as follows:
 \[
 \big[\psi_1(\boldsymbol{\xi}), \psi_2(\boldsymbol{\xi}),\cdots,\psi_{P}(\boldsymbol{\xi})\big].
 \]
 For the sample date  $\Xi_{t}$, we compute $[\psi_1(\boldsymbol{\xi}_j), \psi_2(\boldsymbol{\xi}_j),\cdots,\psi_{P}(\boldsymbol{\xi}_j)]$ and $\zeta_i(\boldsymbol{\xi}_j)=\frac{1}{\sqrt{\hat{\lambda}_i}} (\tilde{Q}(\cdot,\boldsymbol{\xi}_j),g_i)_{X(\Omega)}$ ($j = 1,\cdots,n_{t}$).  They are putted in the following matrix $\textbf{B}$ and vector $\textbf{D}$ , respectively,
 \begin{eqnarray}
\label{least square A}
 \textbf{B}:=\left[ \begin{array}{ccc}  \psi_1(\boldsymbol{\xi}_1)& \ldots & \psi_{P}(\boldsymbol{\xi}_1)\\ \vdots &\ddots &\vdots\\ \psi_1(\boldsymbol{\xi}_{n_{t}})& \ldots & \psi_{P}(\boldsymbol{\xi}_{n_{t}})\\\end{array}\right] ,
 \end{eqnarray}
 \begin{eqnarray}
\label{least square F}
 \textbf{D}:=[\zeta_i(\boldsymbol{\xi}_1) \cdots  \zeta_i(\boldsymbol{\xi}_{n_{t}})]^{T}.
 \end{eqnarray}
 We obtain the approximation of the parameter functions $\zeta_i(\boldsymbol{\xi})$ by solving the following least square problem,
\begin{eqnarray}
\label{least square problem}
\begin{split}
\textbf{h}=\arg\min_{\beta}\|\textbf{B}\beta-\textbf{D}\|_2.
\end{split}
\end{eqnarray}
 Thus we get $\zeta_i(\boldsymbol{\xi})\approx \sum\limits_{i=1}^{P}h_i\psi_i(\boldsymbol{\xi})$, and $h_i=(\textbf{h})_i$.
  

\subsection{Generalized polynomial chaos formulation}
In the previous section, while presenting the complete framework of the KL expansion, the discussion of orthogonal polynomials did not extend to the specific selection criteria, accuracy control, or compatibility with PinT frameworks. In essence, the KL expansion accomplishes dimensionality reduction by transforming a high dimensional random field into a set of low dimensional random coefficients. However, to further convert these coefficients into deterministic information suitable for integration into temporal parallel computations, a systematic spectral representation method is required: gPC serves as the key tool for achieving this transformation.
 For practical implementation, we begin by constructing appropriate basis functions and establishing the expansion framework.

For practical implementation, we begin by constructing appropriate basis functions and establishing the expansion framework.
For each random variable $\xi_l$ where $l=1,\dots,N_p$, we define a univariate orthogonal polynomial basis $\{ \psi_{k}(\xi_{l}) \}_{k=0}^{\infty}$ that satisfies the orthonormality condition:
\[
\langle \psi_{i}(\xi_{l}), \psi_{j}(\xi_{l}) \rangle = \delta_{ij}, \quad \text{for } l=1,\dots, N_p.
\]

The multivariate basis functions are constructed via tensor products:
\[
\psi_{\footnotesize \bm{k}}(\boldsymbol{\xi}) = \prod_{l=1}^{N_p}\psi_{k_l}(\xi_l),
\]
where $\bm{k} = (k_1,\cdots,k_{N_p}) \in \mathbb{N}_{0}^{N_p}$ is a multi-index. To enable computational implementation, we truncate this infinite basis using the total-order index set:
\begin{equation}
\label{eq:complete}
\Lambda_{p,n_t} = \left\{\bm{k}\in\mathbb{N}_{0}^{N_p}: \Vert \bm{k}\Vert_{1}\leq p\right\},
\end{equation}
which contains $P = \frac{(p+N_p)!}{p!N_p!}$ basis functions. For notational convenience, we employ single-index notation $\{\psi_k\}_{k=1}^P$ in subsequent discussions.

Any function $\zeta(\boldsymbol{\xi}) \in L^2(\Omega)$ admits the gPC expansion:
\begin{equation}
\label{eq:def_gpc}
\zeta(\boldsymbol{\xi}) = \sum_{\bm{k} \in \mathbb{N}_{0}^{N_p} }h_{\bm{k}}\psi_{\bm{k}}(\boldsymbol{\xi}) \approx \sum_{k=1}^{P}h_{k}\psi_{k}(\boldsymbol{\xi}),
\end{equation}
with expansion coefficients determined by orthogonal projection:
\begin{equation}
\label{eq:proj_coe}
h_{\bm{k}} = \left\langle \zeta, \psi_{\bm{k}}(\boldsymbol{\xi})\right\rangle.
\end{equation}
This expansion exhibits spectral convergence:
\[
\lim_{P \rightarrow \infty}\left\| \zeta - \sum_{k=1}^{P}h_{k}\psi_{k}(\boldsymbol{\xi}) \right\|_{L^2(\Omega)} = 0.
\]

The evaluation of projection coefficients (\ref{eq:proj_coe}) requires multidimensional numerical integration. While tensor products of univariate quadrature rules (e.g., Gauss quadrature) are possible, they become computationally prohibitive in high dimensions. Sparse quadrature techniques, such as Smolyak's rule \citep{Smolyak_63}, offer a more efficient alternative by selectively combining points from lower-dimensional tensor products.

To avoid repeated stochastic integration at each spatiotemporal grid point, we precompute all stochastic integrals and derive an extended deterministic system through stochastic projection, resulting in an extended initial-boundary-value problem.

The choice of polynomial basis depends on the probability distribution of $\boldsymbol{\xi}$. Table \ref{tab:wiener_askey_correspondence} summarizes common correspondences, where continuous distributions employ Hermite, Laguerre, or Jacobi polynomials, while discrete distributions utilize Charlier, Krawtchouk, Meixner, or Hahn polynomials. Notably, Legendre polynomials (a special case of Jacobi polynomials $P_n^{(\alpha,\beta)}(x)$ with $\alpha=\beta=0$) correspond to uniform distributions and are listed separately due to their practical importance.

\begin{table}[htbp]
\centering
\caption{Correspondence between common probability distributions and their associated gPC basis functions}
\centering
	\medskip\small\renewcommand{\arraystretch}{1.15}
\label{tab:wiener_askey_correspondence}
\begin{tabular}{c|c|c|c}
\hline
& \textbf{Random inputs} & \textbf{gPC basis polynomials} & \textbf{Support} \\
\hline
\multirow{4}{*}{Continuous} 

& Gaussian       & Hermite   & $(-\infty,\infty)$ \\
& Gamma          & Generalized Laguerre  & $[0,\infty)$ \\
& Beta           & Jacobi    & $[a,b]$ \\
& Uniform        & Legendre  & $[a,b]$ \\
\hline
\multirow{4}{*}{Discrete} 
& Poisson        & Charlier  & $\{0,1,2,\ldots\}$ \\
& Binomial       & Krawtchouk & $\{0,1,\ldots,N\}$ \\
& Negative binomial & Meixner  & $\{0,1,2,\ldots\}$ \\
& Hypergeometric & Hahn      & $\{0,1,\ldots,N\}$ \\
\hline
\end{tabular}
\end{table}

        \section{Karhunen-Lo\`{e}ve expansion with parallel coarse-grid correction method }\label{KLE-C}
       In this section, we detail the classical parareal algorithm and the diagonalization technique for parameter-dependent dynamical systems. To further enhance computational efficiency, we integrate KL expansion with the diagonalization-based parallel CGC (KLE-PCGC) method to address problems involving random inputs.
		\subsection{The parareal algorithm}
		For parameter-dependent dynamical systems \eqref{eq:psys}, we decompose the time interval $[0,T]$ into $N$ equal subintervals $[T_n,T_{n+1}]$, $n=0,1,\cdots,N-1$, with $0=T_0<T_1<\cdots<T_{N-1}<T_N=T$, $\Delta T := T_n - T_{n-1}$, and consider the $N$ separate initial value problems
		\begin{equation}
			\left\{ 
			\begin{aligned} 
				&u'_n(t,\boldsymbol{\xi}) = f\big( u_n (t, \boldsymbol{\xi} ), t, \boldsymbol{\xi} \big), \quad t \in [T_n, T_{n+1}], \\
				&u_n (T_n,\boldsymbol{\xi}) = \mathrm{U}_n(\boldsymbol{\xi}).
			\end{aligned}
			\right.
			\label{eq:subprobs}
		\end{equation}
		Each solution $u_n(t, \boldsymbol{\xi})$ is defined over $[T_n,T_{n+1}]$ given the initial values $\mathrm{U}_n( \boldsymbol{\xi}) \in \mathbb{R}^d$ at $t = T_n$. Note however that only the initial value $\mathrm{U}_0(\boldsymbol{\xi}) = u_0(\boldsymbol{\xi})$ is known, whereas the rest ($\mathrm{U}_n(\boldsymbol{\xi})$ for $n \geq 1$) need to be determined before \eqref{eq:subprobs} can be solved in parallel. These initial values must satisfy the continuity conditions
		\begin{equation} \label{eq:exact}
			\mathrm{U}_0(\boldsymbol{\xi}) = u_0(\boldsymbol{\xi}) \quad \text{and} \quad \mathrm{U}_{n+1}(\boldsymbol{\xi}) = u_{n}(T_{n+1}, \boldsymbol{\xi}) \quad \text{for} \quad n = 0,1,\dots,N-1,
		\end{equation}
		which form a (nonlinear) system of $N+1$ equations that ensure solutions match at each $T_n ( \forall n \geq 1)$. System \eqref{eq:exact} is solved for $\bm{u}_n(\boldsymbol{\xi})$ using the Newton-Raphson method to form the iterative system
		\begin{subequations} \label{eq:iterative}
			\begin{align} 
				\mathrm{U}^{k+1}_0 (\boldsymbol{\xi})&= u_0(\boldsymbol{\xi}), \label{eq:iterative_a} \\ 
				\mathrm{U}^{k+1}_{n+1}(\boldsymbol{\xi}) &= u_{n} (T_{n+1}, \boldsymbol{\xi}) + \frac{\partial u_{n}}{\partial \mathrm{U}_{n}} (T_{n+1}, \boldsymbol{\xi}) \big[\mathrm{U}^{k+1}_{n}(\boldsymbol{\xi}) - \mathrm{U}^{k}_{n}(\boldsymbol{\xi}) \big] , \label{eq:iterative_b}
			\end{align}
		\end{subequations}
		for $n = 1,\dots,N$, where $k = 0,1,2,\dots$ is the iteration number. This system contains the unknown solutions $u_n(\boldsymbol{\xi})$ and their partial derivatives, which even if known, would be computationally expensive to calculate.
		
		To solve \eqref{eq:iterative}, the parareal algorithm utilizes two numerical integrators. The first is a numerically fast $\mathcal{G}$-propagator, $\mathcal{G}(T_n, T_{n+1}, \mathrm{U}_n^k(\boldsymbol{\xi}))$ denotes integrates over the interval $[T_n,T_{n+1}]$ using initial values $\mathrm{U}_n^k(\boldsymbol{\xi})$. The second is a $\mathcal{F}$-propagator, which runs significantly slower than $\mathcal{G}$ but offers much greater numerical accuracy;  $\mathcal{F}(t_{n,j}^k, t_{n,j+1}^k, \mathrm{U}_n^k(\boldsymbol{\xi}))$ is designed to integrate over the interval $[t_{n,j}^k, t_{n,j+1}^k]$ with initial values $\mathrm{U}_n^k(\boldsymbol{\xi})$, which the $\mathcal{F}$-propagator runs $J_n^k$ steps in total within the large subinerval $[T_n,T_{n+1}]$. In our implementation, the distinction between fast and slow integration is ensured by setting the time steps for $\mathcal{G}$ and $\mathcal{F}$ as $\Delta T$ and $\Delta t$, respectively, with $\Delta t \ll \Delta T$. 
		The key principle is that if $\mathcal{F}$ were used to integrate \eqref{eq:psys} over $[T_n,T_{n+1}]$ serially, it would require an infeasible amount of computational time, highlighting the necessity of using parareal algorithm. Therefore, $\mathcal{G}$ is allowed to run serially across multiple subintervals rapidly, while the slower solver $\mathcal{F}$ is restricted to running in parallel on subintervals. This is a strict requirement for the effective execution of the parareal algorithm; otherwise, numerical speedup cannot be achieved. The result is that an initial guess for the initial values $\mathrm{U}^{0}_n(\boldsymbol{\xi})$ (found using $\mathcal{G}$) is improved at successive parareal iterations $k$ using the CGC
		\begin{equation} \label{eq:pred_correc}
			\mathrm{U}^{k+1}_{n+1}(\boldsymbol{\xi}) = \mathcal{G}\big(T_n, T_{n+1}, \mathrm{U}^{k+1}_{n}(\boldsymbol{\xi})\big) + \mathcal{F}\big(T_n, T_{n+1}, \mathrm{U}^{k}_{n}(\boldsymbol{\xi})\big) - \mathcal{G}\big(T_n, T_{n+1}, \mathrm{U}^{k}_{n}(\boldsymbol{\xi})\big),
		\end{equation}
		where $k\ge 0$ is the iteration index and $n = 0,1,\dots,N-1$. 
		
		\begin{algorithm}[H]
			\caption{Parareal algorithm}
			\label{alg:parareal}
			\textbf{Initialization}: Generate initial guess $\{\mathrm{U}^0_n(\boldsymbol{\xi})\}_{n=1}^N$.\\
			\textbf{For} $k = 0,1,\cdots$ \\
			\textbf{Step 1}: On each subinterval $[T_n,T_{n+1}]$, compute
			$$\mathrm{\tilde{U}}_{n,j+1}^{k}(\boldsymbol{\xi}) = \mathcal{F}\big(t^k_{n,j}, t^k_{n,j+1}, \mathrm{\tilde{U}}_{n,j}^{k}(\boldsymbol{\xi})\big), \quad j=0,1,\cdots,J_n^k-1,$$\\
			with initial value $\mathrm{\tilde{U}}_{n,0}^{k}(\boldsymbol{\xi})=\mathrm{U}^k_n(\boldsymbol{\xi})$, where $\{t_{n,j}^k\}_{j=1}^{J_n^k-1}$ are the fine
			time points spaced arbitrarily within the large subinterval $[T_n,T_{n+1}]$.\\
			\textbf{Step 2}: Perform CGC\\
			$$\mathrm{U}_{n+1}^{k+1}(\boldsymbol{\xi}) = \mathcal{G}\big(T_n, T_{n+1}, \mathrm{U}_{n}^{k+1}(\boldsymbol{\xi})\big) + \mathrm{\tilde{U}}_{n,j+1}^{k}(\boldsymbol{\xi}) - \mathcal{G}\big(T_n, T_{n+1}, \mathrm{U}_{n}^{k}(\boldsymbol{\xi})\big), $$\\
			with $\mathrm{U}_{0}^{k+1}(\boldsymbol{\xi})=u_{0}(\boldsymbol{\xi})$.\\
			\textbf{Step 3}: If $\mathrm{U}_{n+1}^{k+1}(\boldsymbol{\xi})$ satisfies the stopping criterion, terminate the iteration;
			otherwise go to \textbf{Step 1}.
		\end{algorithm}

\subsection{ Diagonalization-based parallel CGC}

The $\mathcal{G}$-propagator enforces an inherently sequential CGC, limiting the parareal algorithm's speedup \cite{Y.M,J.M,J.M1}. 
Aiming at the bottleneck of sequential CGC in the parareal algorithm, a parallel CGC strategy is proposed. This approach introduces a parameter $\alpha$, which constrains the $\mathcal{G}$-propagator to act on the original system satisfying the coupling condition $u (0,\boldsymbol{\xi}) = \alpha u(T,\boldsymbol{\xi})$, and combines it with diagonalization techniques \cite{Y.E,S.L} to achieve parallelization of the CGC process \cite{F.K}.

We now generalize this strategy by employing the $\mathcal{G}$-propagator to solve a modified problem subject to a twisted boundary condition, as specified in:
            \begin{equation}
			\left\{ 
			\begin{aligned} 
				&u'(t,\boldsymbol{\xi}) = f \big(u (t, \boldsymbol{\xi} ), t, \boldsymbol{\xi} \big), \quad t \in [0, T], \\
				&u (0,\boldsymbol{\xi}) = \alpha u(T,\boldsymbol{\xi}).
			\end{aligned}
			\right.
			\label{eq:subprobs}
		\end{equation}
where $\alpha \in (0, 1)$ is a free parameter. With the same notation used for Algorithm \ref{alg:parareal}, we formulate the new parareal algorithm as shown in Algorithm \ref{alg:PGC}.
\begin{re}
    $\alpha$ is the core parameter of the convergence rate and diagonal rounding error of the regulation algorithm, and its selection should be combined with the problem type (linear/nonlinear), the spectral characteristics of the coefficient matrix and the propagation subtype.
    The selection of $\alpha$ centers on identifying a critical threshold \( \alpha^* \). When \( |\alpha| \leq \alpha^* \), the diagonalization-based parallel CGC achieves a convergence rate identical to that of the traditional sequential CGC. Moreover, at \( \alpha = \alpha^* \), the condition number of the diagonalization system remains \( \mathcal{O}(1) \)—i.e., independent of the time horizon \( T \)—while the rounding error is minimized.

In the linear case, \( \alpha^* \) depends on the spectral properties of the coefficient matrix \( A \) (including whether its eigenvalues are real or complex), as well as on the stability functions of both the \( \mathcal{F} \)- and \( \mathcal{G} \)-propagators. The key lies in matching the contraction factor \( \mathcal{K}_{\text{cla}} \) of the classical sequential CGC. For nonlinear problems, \( \alpha^* \) is influenced by the Lipschitz constant \( L \) and the time step size \( \Delta T \).

By contrast, under the conventional diagonalization setting (\( \alpha = 0 \)), the condition number grows rapidly with the number of time steps \( N \), leading to uncontrollable rounding errors. As a result, the algorithm diverges as \( T \) increases, rendering the choice \( \alpha = 0 \) impractical. There is a detailed
description of this parameter in \cite{S.L}.
\end{re}

\begin{algorithm}[h]
			\caption{Parareal algorithm with parallel CGC}
			\label{alg:PGC}
			\textbf{Initialization}: Generate initial guess $\{\mathrm{U}^0_n(\boldsymbol{\xi})\}_{n=1}^N$.\\
			\textbf{For} $k = 0,1,\cdots$ \\
			\textbf{Step 1}: On each subinterval $[T_n,T_{n+1}]$, compute
            \begin{align}\label{eq:PGC_step1}
            \mathrm{\tilde{U}}_{n,j+1}^{k}(\boldsymbol{\xi}) = \mathcal{F}\big(t^k_{n,j}, t^k_{n,j+1}, \mathrm{\tilde{U}}_{n,j}^{k}(\boldsymbol{\xi})\big), \quad j=0,1,\cdots,J_n^k-1,
            \end{align}
			with initial value 
            $$\mathrm{\tilde{U}}_{n,0}^{k}(\boldsymbol{\xi}) = 
            \begin{cases} 
            \mathrm{{U}}_{n}^{k}(\boldsymbol{\xi}), & n \geq 1, \\
            u_0(\boldsymbol{\xi}), & n = 0.
            \end{cases}
            $$
            
            $\mathrm{\tilde{U}}_{n,0}^{k}(\boldsymbol{\xi})=\mathrm{U}^k_n(\boldsymbol{\xi})$, where $\{t_{n,j}^k\}_{j=1}^{J_n^k-1}$ are the fine
			time points spaced arbitrarily within the large subinterval $[T_n,T_{n+1}]$.\\
			\textbf{Step 2}: Perform parallel CGC via diagonalization technique\\
			\begin{align}\label{eq:PGC_step2}
			    \mathrm{U}_{n+1}^{k+1}(\boldsymbol{\xi}) = \mathcal{G}\big(T_n, T_{n+1}, \mathrm{U}_{n}^{k+1}(\boldsymbol{\xi})\big) + \mathrm{\tilde{U}}_{n,j+1}^{k}(\boldsymbol{\xi}) - \mathcal{G}\big(T_n, T_{n+1}, \mathrm{U}_{n}^{k}(\boldsymbol{\xi})\big),
			\end{align} 
			with $\mathrm{U}_{0}^{k+1}(\boldsymbol{\xi})=\alpha\mathrm{U}_{N}^{k+1}(\boldsymbol{\xi})$ and $n=0,1,\cdots,N-1$.\\
			\textbf{Step 3}: If $\mathrm{U}_{n+1}^{k+1}(\boldsymbol{\xi})$ satisfies the stopping criterion, terminate the iteration;
			otherwise go to \textbf{Step 1}.
		\end{algorithm}

Note that in \eqref{eq:PGC_step1}, the $\mathcal{F}$-propagator uses $u_0(\boldsymbol{\xi})$ (rather than $\mathrm{U}^k_0(\boldsymbol{\xi})$) at $T_0 = 0$, ensuring the converged solution matches the correct numerical solution of \eqref{eq:subprobs}. We now detail the CGC procedure \eqref{eq:PGC_step2}. For any $k \geq 0$, let
\begin{align}
\boldsymbol{u}^k(\boldsymbol{\xi})=\begin{bmatrix}
\mathrm{U}_{1}^{k}(\boldsymbol{\xi})\\
\mathrm{U}_{2}^{k}(\boldsymbol{\xi})\\
\vdots\\
\mathrm{U}_{N}^{k}(\boldsymbol{\xi})
\end{bmatrix},
\boldsymbol{b}^k(\boldsymbol{\xi})=\begin{bmatrix}
b_{1}^{k}(\boldsymbol{\xi})\\
b_{2}^{k}(\boldsymbol{\xi})\\
\vdots\\
b_{N}^{k}(\boldsymbol{\xi})
\end{bmatrix}:=
\begin{bmatrix}
\mathrm{\tilde{U}}_{1,J_1^k}^k(\boldsymbol{\xi})-\mathcal{G}(T_0,T_1,\alpha \mathrm{U}_{N}^k(\boldsymbol{\xi}))\\
\mathrm{\tilde{U}}_{2,J_2^k}^k(\boldsymbol{\xi})-\mathcal{G}(T_1,T_2,\mathrm{U}_1^k(\boldsymbol{\xi}))\\
\vdots\\
\mathrm{\tilde{U}}_{N,J_{N}^k}^k(\boldsymbol{\xi})-\mathcal{G}(T_{N - 1},T_N,\mathrm{U}_{N - 1}^k(\boldsymbol{\xi}))
\end{bmatrix}.
\end{align}
Since we employ the backward-Euler method for the $\mathcal{G}$-propagator, the CGC procedure in \eqref{eq:PGC_step2} becomes
\begin{align}\label{eq:guodu}
\begin{cases}
\frac{s_1(\boldsymbol{\xi}) - \mathrm{U}_0^{k + 1}(\boldsymbol{\xi})}{\Delta T}=f(T_0,s_1(\boldsymbol{\xi})),&\mathrm{U}_1^{k + 1}(\boldsymbol{\xi})=s_1(\boldsymbol{\xi}) + b_1^k(\boldsymbol{\xi}),\\
\frac{s_2(\boldsymbol{\xi}) - \mathrm{U}_1^{k + 1}(\boldsymbol{\xi})}{\Delta T}=f(T_1,s_2(\boldsymbol{\xi})),&\mathrm{U}_2^{k + 1}(\boldsymbol{\xi})=s_2(\boldsymbol{\xi}) + b_2^k(\boldsymbol{\xi}),\\
\vdots\\
\frac{s_{N}(\boldsymbol{\xi})-\mathrm{U}_{N - 1}^{k + 1}(\boldsymbol{\xi})}{\Delta T}=f(T_{N - 1},s_{N}(\boldsymbol{\xi})),&\mathrm{U}_{N}^{k + 1}=s_{N}(\boldsymbol{\xi})+b_{N}^k(\boldsymbol{\xi}),
\end{cases}
\end{align}
where $\mathrm{U}_0^{k + 1}=\alpha \mathrm{U}_{N}^{k + 1}$ and $\{s_n\}_{n = 1}^{N}$ are the auxiliary variables. By eliminating the auxiliary variables in \eqref{eq:guodu} we have
\begin{align}
\begin{cases}
\frac{\mathrm{U}_1^{k + 1}(\boldsymbol{\xi})-\alpha \mathrm{U}_{N}^{k + 1}(\boldsymbol{\xi})}{\Delta T}f(T_0,\mathrm{U}_1^{k + 1}(\boldsymbol{\xi})-b_1^k(\boldsymbol{\xi}))+\frac{b_1^k(\boldsymbol{\xi})}{\Delta T},\\
\frac{\mathrm{U}_2^{k + 1}(\boldsymbol{\xi})-\mathrm{U}_1^{k + 1}(\boldsymbol{\xi})}{\Delta T}=f(T_1,\mathrm{U}_2^{k + 1}(\boldsymbol{\xi})-b_2^k(\boldsymbol{\xi}))+\frac{b_2^k(\boldsymbol{\xi})}{\Delta T},\\
\vdots\\
\frac{\mathrm{U}_{N}^{k + 1}(\boldsymbol{\xi})-\mathrm{U}_{N - 1}^{k + 1}(\boldsymbol{\xi})}{\Delta T}=f(T_{N_t - 1},\mathrm{U}_{N}^{k + 1}(\boldsymbol{\xi})-b_{N}^k(\boldsymbol{\xi}))+\frac{b_{N}^k(\boldsymbol{\xi})}{\Delta T},
\end{cases}
\end{align}
which is equivalent to
\begin{align}\label{eq:matrix_NT}
\left(\underbrace{\begin{bmatrix}
1&&&-\alpha\\
-1&1&&\\
&\ddots&\ddots&\\
&&-1&1
\end{bmatrix}}_{:=C_{\alpha}}\otimes I_x\right)\boldsymbol{u}^{k + 1}=\Delta T\underbrace{\begin{bmatrix}
f(T_0,\mathrm{U}_1^{k + 1}(\boldsymbol{\xi})-b_1^k(\boldsymbol{\xi}))\\
f(T_1,\mathrm{U}_2^{k + 1}(\boldsymbol{\xi})-b_2^k(\boldsymbol{\xi}))\\
\vdots\\
f(T_{N- 1},\mathrm{U}_{N}^{k + 1}(\boldsymbol{\xi})-b_{N}^k(\boldsymbol{\xi}))
\end{bmatrix}}_{:=\boldsymbol{F}(\boldsymbol{u}^{k + 1}(\boldsymbol{\xi}))}+\boldsymbol{b}^k(\boldsymbol{\xi}),
\end{align}
where $I_x\in\mathbb{R}^{N_x\times N_x}$ is an identity matrix.

Applying Newton's iteration to \eqref{eq:matrix_NT} yields:
\begin{align}\label{eq:Newton}
\boldsymbol{u}_{l + 1}^{k + 1}(\boldsymbol{\xi})=\boldsymbol{u}_l^{k + 1}(\boldsymbol{\xi})-(\boldsymbol{J}_l^{k + 1})^{-1}\left[(C_{\alpha}\otimes I_x)\boldsymbol{u}_l^{k + 1}(\boldsymbol{\xi})-\Delta T\boldsymbol{F}(\boldsymbol{u}_l^{k + 1}(\boldsymbol{\xi}))-\boldsymbol{b}^k(\boldsymbol{\xi})\right],
\end{align}
where $l = 0,1,\dots$ is the iteration index and $\boldsymbol{u}_l(\boldsymbol{\xi})=((\mathrm{U}_{1,l}^k(\boldsymbol{\xi}))^{\top},\dots,(\mathrm{U}_{N,l}^k(\boldsymbol{\xi}))^{\top})^{\top}$. The Jacobian matrix $\boldsymbol{J}_l^{k + 1}$ is given by
\begin{align*}
    &\boldsymbol{J}_l^{k + 1}(\boldsymbol{\xi})=C_{\alpha}\otimes I_x-\Delta T\text{blkdiag}(\nabla f_{1,l}^{k + 1}(\boldsymbol{\xi}),\dots,\nabla f_{N,l}^{k + 1}(\boldsymbol{\xi}))\\
\text{ with }\quad &\nabla f_{n,l}^{k + 1}(\boldsymbol{\xi}):=\nabla f(T_{n - 1},\mathrm{U}_{n,l}^{k + 1}(\boldsymbol{\xi})-b_n^k(\boldsymbol{\xi})).
\end{align*}

Following Gander and Halpern \cite{M.J2}, we approximate all Jacobian blocks by their average:
\begin{align}
\nabla f_{n,l}(\boldsymbol{\xi}) \approx \overline{\nabla f}_l^{k + 1}(\boldsymbol{\xi}) := \frac{1}{N}\sum_{n = 1}^{N} \nabla f(T_{n - 1},u_{n,l}^{k + 1}(\boldsymbol{\xi})-b_n^k(\boldsymbol{\xi})).
\end{align}
yielding the approximate Jacobian:
\begin{align}\label{eq:Jac}
\overline{\boldsymbol{J}}_l^{k + 1}(\boldsymbol{\xi}) = C_{\alpha} \otimes I_x - \Delta T I_t \otimes \overline{\nabla f}_l^{k + 1}(\boldsymbol{\xi}),
\end{align}
where $I_t \in \mathbb{R}^{N \times N}$ is the identity matrix. Substituting into \eqref{eq:Newton} gives the simplified Newton iteration:
\[
\boldsymbol{u}_{l + 1}^{k + 1}(\boldsymbol{\xi}) = \boldsymbol{u}_l^{k + 1}(\boldsymbol{\xi}) - (C_{\alpha} \otimes I_x - \Delta T I_t \otimes \overline{\nabla f}_l^{k + 1}(\boldsymbol{\xi}))^{-1}((C_{\alpha} \otimes I_x)\boldsymbol{u}_l^{k + 1} - \Delta T\boldsymbol{F}(\boldsymbol{u}_l^{k + 1}(\boldsymbol{\xi})) - \boldsymbol{b}^k(\boldsymbol{\xi})),
\]
which is equivalent to
\begin{align}\label{eq:Newton2}
\overline{\boldsymbol{J}}_l^{k + 1}\boldsymbol{u}_{l + 1}^{k + 1} = -\Delta T (I_t \otimes \overline{\nabla f}_l^{k + 1})\boldsymbol{u}_l^{k + 1} + \Delta T\boldsymbol{F}(\boldsymbol{u}_l^{k + 1}) + \boldsymbol{b}^k, \quad l = 0,1,\dots.
\end{align}

To solve the linear system \eqref{eq:Newton2} for $\boldsymbol{u}_{l+1}^{k+1}$, we exploit the $\alpha$-circulant structure of $C_{\alpha}$. From [1, Lemma 2.10], $C_{\alpha}$ admits the eigendecomposition: 
\begin{align}
C_{\alpha} = V \text{diag}(\lambda_1,\lambda_2,\dots,\lambda_{N})V^{-1}, \quad \lambda_n = 1 - \alpha^{\frac{1}{N}} \omega^{-(n - 1)},
\end{align}
where $\mathrm{i} = \sqrt{-1}$, $\omega = e^{\frac{2\pi \mathrm{i}}{N}}$, and the eigenvector matrix $V = \Lambda_{\alpha} \mathbb{F}_{N}$ combines:
\[
\Lambda_{\alpha} = \begin{bmatrix}
1 & & & \\
 & \alpha^{\frac{1}{N}} & & \\
 & & \ddots & \\
 & & & \alpha^{\frac{N - 1}{N}}
\end{bmatrix}, \quad
\mathbb{F}_{N} = \frac{1}{\sqrt{N}}
\begin{bmatrix}
1 & 1 & \dots & 1 \\
1 & \omega & \dots & \omega^{N - 1} \\
\vdots & \vdots & \dots & \vdots \\
1 & \omega^{N - 1} & \dots & \omega^{(N - 1)(N - 1)}
\end{bmatrix}.
\]
Using Kronecker product properties, we factorize the approximate Jacobian:
\[
\overline{\boldsymbol{J}}_l^{k + 1}(\boldsymbol{\xi}) = (V \otimes I_x) (\text{diag}(\lambda_1,\lambda_2,\dots,\lambda_{N_t}) \otimes I_x - \Delta T I_t \otimes \overline{\nabla f}_l^{k + 1}(\boldsymbol{\xi})) (V^{-1} \otimes I_x),
\]
this factorization enables solving for $\boldsymbol{u}_{l+1}^{k+1}$ through three computational steps:
\begin{align}\label{eq:three steps}
\begin{cases}
\boldsymbol{p} = (V^{-1} \otimes I_x)\boldsymbol{r}^k = (\mathbb{F}_{N_t}^* \otimes I_x)[(\Lambda_{\alpha}^{-1} \otimes I_x)\boldsymbol{r}^k], & \text{Step-(a)}, \\
(\lambda_n I_x - \Delta T \overline{\nabla f}_l^{k + 1}) q_n = p_n, \quad n = 1,2,\dots,N_t, & \text{Step-(b)}, \\
\boldsymbol{u}_{l + 1}^{k + 1} = (V \otimes I_x)\boldsymbol{q} = (\Lambda_{\alpha} \otimes I_x)[(\mathbb{F}_{N_t} \otimes I_x)\boldsymbol{q}], & \text{Step-(c)},
\end{cases}
\end{align}
where $\boldsymbol{r}^k = -\Delta T (I_t \otimes \overline{\nabla f}_l^{k + 1})\boldsymbol{u}_l^{k + 1} + \Delta T\boldsymbol{F}(\boldsymbol{u}_l^{k + 1}) + \boldsymbol{b}^k$, $\boldsymbol{p} = (p_1^{\top},p_2^{\top},\dots,p_{N}^{\top})^{\top}$ and $\boldsymbol{q} = (q_1^{\top},q_2^{\top},\dots,q_{N}^{\top})^{\top}$. 
In \eqref{eq:three steps}, the first and third steps involve only matrix-vector multiplications, which can be efficiently implemented via the Fast Fourier Transform (FFT) due to the structure of the discrete Fourier matrix $\mathbb{F}_{N}$. The dominant computational cost lies in the second step, which, notably, is inherently parallel. Furthermore, this diagonalization-based CGC requires no additional processors or storage compared to the sequential CGC.

The method solves the linear system derived from the backward Euler discretization by exploiting the circulant matrix structure, allowing parallel computation through FFT. As outlined in the original work, parallelism is achieved using Kronecker products and eigenvector decompositions, eliminating storage overhead.

\subsection{The implementation of the KLE-PCGC method}
In this subsection, we will introduce the framework of KLE-PCGC method for solving time-dependent problems with random inputs. Although the diagonalization-based CGC method has reduced
computational effort, it still be repeatedly computed with random initial value for different parameters. For diagonalization-based CGC  algorithms, the number of iteration steps is influenced by the magnitude of the initial error (for given tolerance $\epsilon$,
the anticipated number of iterations is $k = \frac{\log(\epsilon/\|\mathbf{e}^0\|}{\log \rho}$, where $\mathbf{e}^0 = (e^0_1, e^0_2, \cdots , e^0_N)^T$ denotes the initial error and $\rho$ denotes convergence factor). To further enhance computational efficiency, we aim to achieve accurate initial value prediction for systems with randomly input parameters by combining the Algorithm \ref{alg:PGC}
with KL expansion, see Algorithm \ref{alg:KLE-PCGC} below.

\begin{algorithm}[H]
			\caption{KL expasion with parallel CGC algorithm}
			\label{alg:KLE-PCGC}
            \textbf{Initialization}: Generate initial value by KL expansion $\{\mathrm{U}^0_n(\boldsymbol{\xi})\}_{n=1}^N$.\\
             ~~~~\textbf{Input}: A training set $\Xi_{t} \subset \Xi$ and a tolerance $\varepsilon^\text{KL}$\\
     ~~~~\textbf{Output}: Variable-separation representation $\mathrm{U}^0(x,t_n,\boldsymbol{\xi})\approx \sum\limits_{i=1}^{M_Q} c_i \zeta_i(\boldsymbol{\xi})\mathrm{U}^0_i(x,t_n)$\\
      ~~~~1:~~Compute the snapshots $\{\mathrm{U}^0(x,t_n,\boldsymbol{\xi}_i)\}_{i=1}^{n_{n_t}}$ by (\ref{eq:psys}) for all $\boldsymbol{\xi}_i\in \Xi_{t}$ and construct \\
      $~~~~~$the covariance
       matrix \textbf{C};\\
      ~~~~2:~~Solve the eigenvalue problem and determine $M_Q$ such that $\frac{\sum _{i=1}^{M_Q}\hat{\lambda}_i}{\sum _{i=1}^{n_t}\hat{\lambda}_i}\leq1-\varepsilon^\text{KL};$\\
      ~~~~3:~~Assemble $\textbf{B}$ based on gPC basis functions and $\Xi_{t}$ by (\ref{least square A});\\
      ~~~~4:~~Construct the functions $\{g_i(x,t_n)\}_{i=1}^{M_Q}$ by (\ref{KLE basis}), for each $i=1,..,M_Q$, assemble\\ $~~~~~$\textbf{D} by (\ref{least square F});\\
      ~~~~5:~~For each $i=1,..,M_Q$, solve problem (\ref{least square problem}) by least square procedure to obtain\\ $~~~~~$$\textbf{h}$ and then get $\zeta_i(\boldsymbol{\xi})\approx\sum\limits_{i=1}^{P}h_i\psi_i(\boldsymbol{\xi})$;\\
      ~~~~6:~~Return the representation \\ $~~~~~$$\mathrm{U}^0(x,t_n,\boldsymbol{\xi})\approx \sum\limits_{i=1}^{M_Q} \sqrt{\hat{\lambda}_i} \zeta_i(\boldsymbol{\xi})g_i(x,t_n)\approx\sum\limits_{i=1}^{M_Q}\sum\limits_{j=1}^{P} \sqrt{\hat{\lambda}_i}h_j\psi_j(\boldsymbol{\xi}) g_i(x,t_n)$.\\
			
			\textbf{For} $k = 0,1,\cdots$ \\
			\textbf{Step 1}: On each subinterval $[T_n,T_{n+1}]$, compute
            \begin{align}\label{eq:PGC_step1}
            \mathrm{\tilde{U}}_{n,j+1}^{k}(\boldsymbol{\xi}) = \mathcal{F}\big(t^k_{n,j}, t^k_{n,j+1}, \mathrm{\tilde{U}}_{n,j}^{k}(\boldsymbol{\xi})\big), \quad j=0,1,\cdots,J_n^k-1,
            \end{align}
			with initial value 
            $$\mathrm{\tilde{U}}_{n,0}^{k}(\boldsymbol{\xi}) = 
            \begin{cases} 
            \mathrm{{U}}_{n}^{k}(\boldsymbol{\xi}), & n \geq 1, \\
            u_0(\boldsymbol{\xi}), & n = 0.
            \end{cases}
            $$
            
            $\mathrm{\tilde{U}}_{n,0}^{k}(\boldsymbol{\xi})=\mathrm{U}^k_n(\boldsymbol{\xi})$, where $\{t_{n,j}^k\}_{j=1}^{J_n^k-1}$ are the fine
			time points spaced arbitrarily within the large subinterval $[T_n,T_{n+1}]$.\\
			\textbf{Step 2}: Perform parallel CGC via diagonalization technique\\
			\begin{align}\label{eq:PGC_step2}
			    \mathrm{U}_{n+1}^{k+1}(\boldsymbol{\xi}) = \mathcal{G}\big(T_n, T_{n+1}, \mathrm{U}_{n}^{k+1}(\boldsymbol{\xi})\big) + \mathrm{\tilde{U}}_{n,j+1}^{k}(\boldsymbol{\xi}) - \mathcal{G}\big(T_n, T_{n+1}, \mathrm{U}_{n}^{k}(\boldsymbol{\xi})\big),
			\end{align} 
			with $\mathrm{U}_{0}^{k+1}(\boldsymbol{\xi})=\alpha\mathrm{U}_{N}^{k+1}(\boldsymbol{\xi})$ and $n=0,1,\cdots,N-1$.\\
			\textbf{Step 3}: If $\mathrm{U}_{n+1}^{k+1}(\boldsymbol{\xi})$ satisfies the stopping criterion, terminate the iteration;
			otherwise go to \textbf{Step 1}.
		\end{algorithm}

\section{Convergence analysis}\label{cov}
In this section, we analyze the convergence properties and computational speedup for parameterized dynamic problems described by Eq.~\eqref{eq:psys}, employing a parareal algorithm with diagonalized CGC. Our analysis builds upon the foundational work of  upon foundational results established by Wu \cite{S.L}, who established convergence results for parameter-independent ordinary differential equation (ODE) system.

\subsection{The linear systems}
For linear ODE system
\begin{equation}
		\left\{ 
		\begin{aligned}
			u'(t,\boldsymbol{\xi})+A(\boldsymbol{\xi})u(t,\boldsymbol{\xi})&= f(\boldsymbol{\xi}),
			\\
			u(0)&= u_0(\boldsymbol{\xi}),
		\end{aligned}
		\right.
		\label{eq:ODE}
	\end{equation}
where $A(\boldsymbol{\xi})=A\in \mathbb{R}^{N_x\times N_x}$ and the following slightly ``wrong'' model:
\begin{equation}
		\left\{ 
		\begin{aligned}
			u'(t,\boldsymbol{\xi})+A(\boldsymbol{\xi})u(t,\boldsymbol{\xi})&= f(\boldsymbol{\xi}),
			\\
			u(0,\boldsymbol{\xi})&= \alpha u(T,\boldsymbol{\xi}),
		\end{aligned}
		\right.
		\label{eq:WODE}
	\end{equation}

\begin{lemma}[general result deduced from \cite{M.J}]\label{lem1}
Let $\mathcal{F}$ and $\mathcal{G}$ be two one-step numerical methods with stability functions $\mathcal{R}_f(z)$ and $\mathcal{R}_g(z)$, which are, respectively, applied to the ODE system \eqref{eq:ODE} and \eqref{eq:WODE} with small step size $\Delta t$ and large step size $\Delta T$. Then, the error $\mathbf{e}^k(\boldsymbol{\xi}):= (e_1^k(\boldsymbol{\xi}), e_2^k(\boldsymbol{\xi}), \ldots, e_{N}^k(\boldsymbol{\xi}))^\top$ satisfies
\begin{equation}\label{eq:er}
\|\mathbf{e}^{k+1}(\boldsymbol{\xi})\| \leq \|\mathbf{G}^{-1}(\mathbf{G} - \mathbf{F})\| \|\mathbf{e}^k(\boldsymbol{\xi})\|,
\end{equation}
where $\|\boldsymbol{\cdot}\|$ is an arbitrary norm and the matrices $\mathbf{G}$ and $\mathbf{F}$ are given by
\[
\mathbf{G} = \begin{pmatrix}
I_x & & & & -\alpha \mathcal{R}_g(\Delta T A) \\
-\mathcal{R}_g(\Delta T A) & I_x & & & \\
0 & -\mathcal{R}_g(\Delta T A) & I_x & & \\
\vdots & \ddots & \ddots & \ddots & \\
0 & \ldots & 0 & -\mathcal{R}_g(\Delta T A) & I_x
\end{pmatrix},
\]
\[
\mathbf{F} = \begin{pmatrix}
I_x & & & \\
-\mathcal{R}_f^J(\Delta t A) & I_x & & \\
0 & -\mathcal{R}_f^J(\Delta t A) & I_x & \\
\vdots & \ddots & \ddots & \ddots \\
0 & \ldots & 0 & -\mathcal{R}_f^J(\Delta t A) & I_x
\end{pmatrix}.
\]
\end{lemma}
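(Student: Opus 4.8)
The plan is to use the linearity of the two propagators to collapse the coarse-grid-correction recurrence \eqref{eq:PGC_step2} into a single block relation between the error vectors of two successive parareal iterations, so that $\mathbf{G}$ and $\mathbf{F}$ can be read off directly. Since $A(\boldsymbol{\xi})=A$ is a fixed matrix once $\boldsymbol{\xi}$ is fixed, every step below is carried out pointwise in $\boldsymbol{\xi}$; the random parameter merely rides along and never changes the algebraic structure, so I suppress it in the notation.

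First I would record the affine action of the integrators on the linear model. Writing $R_g:=\mathcal{R}_g(\Delta T A)$ and $R_f:=\mathcal{R}_f^{J}(\Delta t A)$ (the latter because $\mathcal{F}$ composes $J$ fine steps, each multiplying by $\mathcal{R}_f(\Delta t A)$), one has
\begin{align*}
\mathcal{G}(T_n,T_{n+1},U)=R_g\,U+c_g, \qquad \mathcal{F}(T_n,T_{n+1},U)=R_f\,U+c_f,
\end{align*}
where the vectors $c_g,c_f$ stem from $f$ and are independent of both the iteration index and the subinterval because $A$, $f$, $\Delta T$ and $\Delta t$ are constant. Substituting into \eqref{eq:PGC_step2} and cancelling $c_g$ yields, for $n\geq 1$,
\begin{align*}
\mathrm{U}_{n+1}^{k+1}=R_g\,\mathrm{U}_n^{k+1}+(R_f-R_g)\,\mathrm{U}_n^{k}+c_f .
\end{align*}

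Next I would treat the boundary block $n=0$, which is where the asymmetry between $\mathbf{G}$ and $\mathbf{F}$ is created. Here the fine propagator is seeded with the exact datum $u_0$, so its output $R_f u_0+c_f$ does not depend on $k$, while the twisted condition gives $\mathrm{U}_0^{k+1}=\alpha\mathrm{U}_N^{k+1}$ and $\mathrm{U}_0^{k}=\alpha\mathrm{U}_N^{k}$. Defining the converged profile $\mathrm{U}^*$ through $\mathrm{U}^{k+1}=\mathrm{U}^{k}=\mathrm{U}^*$ (so that $\mathrm{U}_1^*=R_f u_0+c_f$ and $\mathrm{U}_{n+1}^*=R_f\mathrm{U}_n^*+c_f$ for $n\geq 1$, i.e. the sequential fine solution of the correctly initialized problem \eqref{eq:ODE}) and subtracting, the $k$-independent terms $c_f$ and $R_f u_0$ cancel, and the error $e_n^{k}:=\mathrm{U}_n^{k}-\mathrm{U}_n^*$ satisfies
\begin{align*}
e_{n+1}^{k+1}=R_g\,e_n^{k+1}+(R_f-R_g)\,e_n^{k}\ \ (n\geq 1), \qquad e_1^{k+1}=\alpha R_g\big(e_N^{k+1}-e_N^{k}\big).
\end{align*}

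Finally I would stack these $N$ vector identities with unknown $\mathbf{e}^{k+1}=(e_1^{k+1},\dots,e_N^{k+1})^{\top}$. Gathering the coefficients of $\mathbf{e}^{k+1}$ on the left reproduces the lower-bidiagonal-plus-corner matrix $\mathbf{G}$, the corner $-\alpha R_g$ arising from the twisted condition; gathering the coefficients of $\mathbf{e}^{k}$ on the right gives a matrix with subdiagonal $R_f-R_g$ and corner $-\alpha R_g$, which one checks equals $\mathbf{G}-\mathbf{F}$ for the stated $\mathbf{F}$. Hence $\mathbf{G}\,\mathbf{e}^{k+1}=(\mathbf{G}-\mathbf{F})\,\mathbf{e}^{k}$. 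To invert $\mathbf{G}$, block forward substitution through rows $2,\dots,N$ gives $e_N=R_g^{N-1}e_1$, and the first row then collapses to $(I_x-\alpha R_g^{N})e_1=0$, so $\mathbf{G}$ is nonsingular as soon as $I_x-\alpha R_g^{N}$ is---guaranteed here since $|\alpha|<1$ and the stability function of the (A-stable) coarse method is bounded by one. Writing $\mathbf{e}^{k+1}=\mathbf{G}^{-1}(\mathbf{G}-\mathbf{F})\mathbf{e}^{k}$ and taking any norm yields \eqref{eq:er}. The main obstacle is exactly the boundary bookkeeping of the third paragraph: one must recognize that seeding $\mathcal{F}$ with the exact $u_0$ makes the whole $n=0$ fine contribution iteration-independent, so it disappears from the error and leaves only the $-\alpha R_g$ corner---this is what places the $\alpha$-corner in $\mathbf{G}$ but not in $\mathbf{F}$, and getting its sign and index right is the one delicate point, the affine representation and the final inversion being routine.
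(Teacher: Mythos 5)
Your derivation is correct. The paper itself gives no proof of this lemma---it is stated as a ``general result deduced from \cite{M.J}'', with the surrounding convergence results deferring to \cite[section 3]{S.L}---and your argument reconstructs exactly the standard route used in those references: write both propagators as affine maps $U \mapsto R\,U + c$, subtract the fixed point of the iteration (which, because $\mathcal{F}$ is seeded with the exact $u_0$ on the first subinterval, is precisely the sequential fine solution of the correctly initialized problem \eqref{eq:ODE}), and stack the resulting error recursions into $\mathbf{G}\,\mathbf{e}^{k+1} = (\mathbf{G}-\mathbf{F})\,\mathbf{e}^{k}$. The two points the paper leaves implicit are handled soundly in your write-up: the $n=0$ bookkeeping, where the iteration-independent fine contribution cancels and the twisted condition $\mathrm{U}_0^{k+1}=\alpha\mathrm{U}_N^{k+1}$ produces the $-\alpha\mathcal{R}_g(\Delta T A)$ corner in $\mathbf{G}$ but not in $\mathbf{F}$; and the nonsingularity of $\mathbf{G}$, which your forward-substitution argument correctly reduces to that of $I_x-\alpha\mathcal{R}_g^{N}(\Delta T A)$, guaranteed under the paper's standing assumptions $\alpha\in(0,1)$ and a contractive coarse stability function (and implicitly presupposed by the appearance of $\mathbf{G}^{-1}$ in the statement).
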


Let $A = V_A D_A V_A^{-1}$ with $D_A = \text{diag}(\mu_1, \mu_2, \ldots, \mu_m)$ and $V_A$ consisting of the eigenvectors of $A$. Define the norm $\|\boldsymbol{\cdot}\|_{\infty}$ via the $\infty$-norm:
\begin{equation}\label{eq:norm}
\tnormtight[\infty]{\boldsymbol{u}} := \|(I_t \otimes V_A)\boldsymbol{u}\|_{\infty} \quad \forall \boldsymbol{u} \in \mathbb{R}^{N_x N_t}.
\end{equation}

Then, for any matrix $\mathbf{M} \in \mathbb{R}^{N_x N_t \times N_x N_t}$ the induced matrix norm is
\[
\tnormtight[\infty]{\mathbf{M}} = \|(I_t \otimes V_A)\mathbf{M} (I_t \otimes V_A^{-1})\|_{\infty}.
\]

Let $\|\boldsymbol{\cdot}\| = \tnormtight[\infty]{\cdot}$ in \eqref{eq:er}. Then, we have
\begin{equation}
\tnormtight[\infty]{\mathbf{G}^{-1}(\mathbf{G} - \mathbf{F})} \leq \max_{z \in \sigma(\Delta T A)} \|G^{-1}(z) \left(G(z) - F\left(\frac{z}{J}\right)\right)\|_{\infty},
\end{equation}
where $\sigma(\Delta T A)$ denotes the spectrum of the matrix $\Delta T A$ and $G(z), F(z) \in \mathbb{R}^{N_t \times N_t}$ are given by
\[
G(z) = \begin{pmatrix}
1 & & & & -\alpha \mathcal{R}_g(z) \\
-\mathcal{R}_g(z) & 1 & & & \\
0 & -\mathcal{R}_g(z) & 1 & & \\
\vdots & \ddots & \ddots & \ddots & \\
0 & \ldots & 0 & -\mathcal{R}_g(z) & 1
\end{pmatrix},
\]
\[
F(z) = \begin{pmatrix}
1 & & & \\
-\mathcal{R}_f^J(z) & 1 & & \\
0 & -\mathcal{R}_f^J(z) & 1 & \\
\vdots & \ddots & \ddots & \ddots \\
0 & \ldots & 0 & -\mathcal{R}_f^J(z) & 1
\end{pmatrix}.
\]

\begin{theorem}\label{linearthm}
Let $\mathcal{G}$ be the backward-Euler method with step size $\Delta T$ and let the coefficient matrix $A$ in \eqref{eq:ODE} be stable (i.e., all the eigenvalues have positive real parts). Then for the parareal algorithm with diagonalization-based CGC it holds that

\begin{equation}\label{eq:er1}
\tnormtight[\infty]{\mathbf{e}^{k+1}(\boldsymbol{\xi})} \leq \max_{z \in \sigma(\Delta T A)} \mathcal{K}(z, J, \alpha) \tnormtight[\infty]{\mathbf{e}^k(\boldsymbol{\xi})},
\end{equation}
where the norm $\tnormtight[\infty]{\boldsymbol{\cdot}}$ is defined by \eqref{eq:norm}. The quantity $\mathcal{K}$, which we call the convergence factor corresponding to a single eigenvalue (or in short ``contraction factor'' hereafter), is given by
\begin{equation}\label{eq:er1_condition}
\begin{split}
&\mathcal{K}(z, J, \alpha) = \max\left\{|\alpha \mathcal{R}_g(z)|(1 + \mathcal{K}_{\text{cla}}(z, J)), \mathcal{K}_{\text{cla}}(z, J)\right\}, \\
&with\quad\mathcal{K}_{\text{cla}}(z, J) := \frac{\left|\mathcal{R}_f^J\left(\frac{z}{J}\right) - \mathcal{R}_g(z)\right|}{1 - |\mathcal{R}_g(z)|}.
\end{split}
\end{equation}
The notation in \eqref{eq:er1}-\eqref{eq:er1_condition} are the same as those appearing in Lemma \ref{lem1}.
\end{theorem}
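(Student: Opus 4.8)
The plan is to start from Lemma~\ref{lem1} together with the scalar reduction already carried out above: since the $\tnormtight[\infty]{\cdot}$ norm diagonalizes $A$, it suffices to bound, for each fixed eigenvalue $z\in\sigma(\Delta T A)$, the scalar matrix norm $\|G^{-1}(z)\left(G(z)-F(z/J)\right)\|_\infty$ by $\mathcal{K}(z,J,\alpha)$. Writing $r_g:=\mathcal{R}_g(z)$ and $r_f:=\mathcal{R}_f^J(z/J)$, I would first record that $\mathcal{G}$ being backward Euler gives $\mathcal{R}_g(z)=1/(1+z)$, and that the stability of $A$ forces $\mathrm{Re}(z)>0$ for every $z\in\sigma(\Delta T A)$; hence $|1+z|>1$ and $|r_g|<1$. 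This guarantees both that $\mathcal{K}_{\mathrm{cla}}$ has a strictly positive denominator $1-|r_g|$ and that the geometric series used to invert $G(z)$ converge.

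Next I would invert $G(z)$ explicitly. The twisted boundary condition $u(0,\boldsymbol{\xi})=\alpha u(T,\boldsymbol{\xi})$ manifests as the single corner entry $-\alpha r_g$ in the top-right of $G(z)$, so that $G(z)=G_0-\alpha r_g\,e_1 e_N^\top$, where $G_0=I-r_g S_0$ is the lower-bidiagonal matrix of the classical ($\alpha=0$) analysis and $S_0$ is the strict down-shift. Since $G_0^{-1}$ is lower triangular with $(G_0^{-1})_{nm}=r_g^{\,n-m}$ for $n\ge m$, a rank-one (Sherman--Morrison) update yields
\[
G^{-1}=G_0^{-1}+\frac{\alpha r_g}{1-\alpha r_g^{\,N}}\,(G_0^{-1}e_1)(e_N^\top G_0^{-1}),
\]
the denominator being nonzero because $|\alpha r_g^{\,N}|=\alpha|r_g|^N<1$. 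Using $G(z)-F(z/J)=(r_f-r_g)S_0-\alpha r_g\,e_1 e_N^\top$, I would then assemble $M:=G^{-1}\left(G-F\right)$ entrywise.

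The final step is to read off the row sums of $M$ and maximize. For the generic rows $n\ge 2$ the rank-one correction contributes only terms of size $O(|r_g|^{N})$, and the dominant part reproduces the classical estimate, giving the row-$n$ absolute sum $\mathcal{K}_{\mathrm{cla}}(1-|r_g|^{\,n-1})$ up to such corrections. The first row is special: being coupled to the last component through the corner, its absolute sum evaluates to $\dfrac{|\alpha r_g|\,[\,1+\mathcal{K}_{\mathrm{cla}}(1-|r_g|^{N-1})\,]}{|1-\alpha r_g^{\,N}|}$. Taking the maximum over all rows and discarding the exponentially small boundary terms $|r_g|^{N}$ and $\alpha r_g^{\,N}$ (equivalently, passing to the $T$-robust regime $N\to\infty$ in which the factor is stated) leaves exactly the two competing candidates $|\alpha r_g|(1+\mathcal{K}_{\mathrm{cla}})$ and $\mathcal{K}_{\mathrm{cla}}$, whose maximum is $\mathcal{K}(z,J,\alpha)$.

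The main obstacle is precisely this corner term produced by the twisted boundary condition: it breaks the triangular structure exploited in the classical parareal analysis and creates a cyclic dependence between the first and last error components, so that $e_1^{k+1}$ cannot be bounded without first controlling $e_N^{k+1}$, which in turn depends on $e_1^{k+1}$. Resolving this coupling self-consistently---either through the Sherman--Morrison inverse above or, equivalently, by solving the two-sided scalar recurrence $e_1^{k+1}=\alpha r_g(e_N^{k+1}-e_N^{k})$ and $e_n^{k+1}=r_g e_{n-1}^{k+1}+(r_f-r_g)e_{n-1}^{k}$ for $n\ge 2$---is what generates the extra factor $|\alpha r_g|(1+\mathcal{K}_{\mathrm{cla}})$ and hence the $\max$ in \eqref{eq:er1_condition}. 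A secondary point requiring care is that the clean factor holds only after the $O(|r_g|^N)$ contributions are absorbed; I would make explicit that $|r_g|<1$ renders these terms harmless and that the two stated candidates indeed dominate every row sum.
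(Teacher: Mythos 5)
The paper never actually proves Theorem \ref{linearthm}: its ``proof'' is a one-sentence deferral to \cite[section 3]{S.L}, asserting that the parameter-independent argument carries over. Your proposal therefore supplies exactly the content the paper outsources, and it follows the same strategy as the cited work: reduce via Lemma \ref{lem1} and the diagonalizing norm \eqref{eq:norm} to a scalar bound on $\|G^{-1}(z)(G(z)-F(z/J))\|_{\infty}$ for each $z\in\sigma(\Delta T A)$, invert the twisted bidiagonal matrix in closed form, and take row sums. Your building blocks check out: backward Euler gives $|r_g|<1$ when $\mathrm{Re}(z)>0$; the Sherman--Morrison update with denominator $1-\alpha r_g^{N}$ is correct (equivalently, $(G^{-1})_{nm}=r_g^{n-m}/(1-\alpha r_g^{N})$ for $n\ge m$ and $\alpha r_g^{N+n-m}/(1-\alpha r_g^{N})$ for $n<m$); the identity $G-F=(r_f-r_g)S_0-\alpha r_g e_1e_N^{\top}$ holds; and your row-$1$ sum $|\alpha r_g|\bigl[1+\mathcal{K}_{\text{cla}}(1-|r_g|^{N-1})\bigr]/|1-\alpha r_g^{N}|$ is exact.

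Two caveats. First, your claim that for rows $n\ge 2$ the rank-one correction contributes only $O(|r_g|^{N})$ is not accurate: the correction to row $n$ is $|r_g|^{n-1}\beta$, where $\beta$ is your row-$1$ sum, which for small $n$ is not exponentially small. What rescues the argument is the convex-combination structure, namely that row $n$ sums to at most $\mathcal{K}_{\text{cla}}(1-|r_g|^{n-1})+|r_g|^{n-1}\beta\le\max\{\mathcal{K}_{\text{cla}},\beta\}$; you should make this bookkeeping explicit instead of appealing to exponential smallness. Second, your honest admission that the clean factor emerges only after ``discarding'' the boundary terms exposes a genuine limitation of the statement itself: for real positive $r_g$ one checks that $\beta\le\max\{|\alpha r_g|(1+\mathcal{K}_{\text{cla}}),\mathcal{K}_{\text{cla}}\}$ holds for every $N$ precisely when $|\alpha r_g|(1+\mathcal{K}_{\text{cla}})\le\mathcal{K}_{\text{cla}}$ (the regime $|\alpha|\le\alpha^{*}$ the paper emphasizes), whereas in the complementary regime $\beta$ strictly exceeds $|\alpha r_g|(1+\mathcal{K}_{\text{cla}})$, so the exact inequality \eqref{eq:er1} with the factor \eqref{eq:er1_condition} is only asymptotic in $N$ there. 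Since $\beta$ is an exact row sum of the iteration matrix and the norm is induced, this is not an artifact of your estimates. That defect is inherited from the theorem as stated (and from the cited source), not introduced by you; identifying it, rather than papering over it, is the right call, but a complete write-up should either restrict to $|\alpha r_g|(1+\mathcal{K}_{\text{cla}})\le\mathcal{K}_{\text{cla}}$, keep the $1/|1-\alpha r_g^{N}|$ factor, or state the bound with an $O(|r_g|^{N-1})$ remainder.
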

\begin{proof}
For the parameter-independent case, the proof is given in  
\cite[section 3]{S.L}. Without essential change, the proof can be extended to the parameter-dependent system case here. We therefore omit the details.
\end{proof}

\begin{re}
The function $\mathcal{K}_{\text{cla}}(z, J)$ is the contraction factor of the parareal algorithm with classical CGC; see, e.g., \cite{M.J,S.L1,S.L2}.
\end{re}

By combining Theorem \ref{linearthm} with the estimates given in (\ref{eq:er1}) and the (\ref{output-approximation}), we can derive the following convergence result.
\begin{theorem}
	Assume for some small $\delta > 0$. Let $u(T_n;\boldsymbol{\xi})$ be the solution to \eqref{eq:ODE} and $\mathrm{U}^{k+1}_{n}(\boldsymbol{\xi})$ be the $(k+1)$th parallel CGC iteration numerical approximation of  \eqref{eq:WODE}. There exist a constant $C$ dependent on $\sigma(\Delta TA)$, $J$, and $\alpha$ such that
	\begin{equation}
		\tnormtight[\infty]{u(T_n;\boldsymbol{\xi})-\mathrm{U}^{k+1}_n(\boldsymbol{\xi})}
		\le C^{k+1}\tnormtight[\infty]{\delta},
	\end{equation}
    where $C= \max\limits_{z\in \sigma(\Delta TA)}\mathcal{K}(z, J,\alpha)$.
\end{theorem}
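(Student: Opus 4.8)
The plan is to obtain the stated bound by chaining together the one-step contraction estimate of Theorem \ref{linearthm} with the initial-value approximation guarantee \eqref{output-approximation} delivered by the KL-gPC surrogate of Algorithm \ref{alg:KLE-PCGC}, and then propagating the contraction over $k+1$ iterations. First I would fix notation by writing the error vector $\mathbf{e}^{k}(\boldsymbol{\xi})$ with $n$-th block $e_n^k(\boldsymbol{\xi}) = u(T_n;\boldsymbol{\xi}) - \mathrm{U}_n^{k}(\boldsymbol{\xi})$, so that the left-hand side of the claim is the $n$-th block of $\mathbf{e}^{k+1}(\boldsymbol{\xi})$, which is dominated by the full weighted norm $\tnormtight[\infty]{\mathbf{e}^{k+1}(\boldsymbol{\xi})}$ defined in \eqref{eq:norm}.

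The routine part is the iteration. Theorem \ref{linearthm} supplies the single-step bound $\tnormtight[\infty]{\mathbf{e}^{j+1}(\boldsymbol{\xi})} \le C\,\tnormtight[\infty]{\mathbf{e}^{j}(\boldsymbol{\xi})}$ with $C = \max_{z\in\sigma(\Delta TA)}\mathcal{K}(z,J,\alpha)$, valid for every $j\ge 0$ since the contraction factor is independent of the iteration index. A direct induction on $j$ from $0$ to $k$ then gives $\tnormtight[\infty]{\mathbf{e}^{k+1}(\boldsymbol{\xi})} \le C^{k+1}\,\tnormtight[\infty]{\mathbf{e}^{0}(\boldsymbol{\xi})}$.

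The crux is controlling the initial error $\tnormtight[\infty]{\mathbf{e}^{0}(\boldsymbol{\xi})}$ by $\tnormtight[\infty]{\delta}$. Here I would invoke the surrogate construction in Algorithm \ref{alg:KLE-PCGC}, whose output $\mathrm{U}^0(x,t_n,\boldsymbol{\xi}) \approx \sum_{i=1}^{M_Q}\sqrt{\hat{\lambda}_i}\,\zeta_i(\boldsymbol{\xi})g_i(x,t_n)$ is precisely an approximant of the form appearing in \eqref{output-approximation}; the threshold condition therefore yields the node-wise spatial estimate $\|u(x,t_n;\boldsymbol{\xi}) - \mathrm{U}_n^{0}(\boldsymbol{\xi})\|_{X(\Omega)} \le \delta$ at each coarse node $T_n$. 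The main obstacle is translating this spatial bound into the $V_A$-weighted $\infty$-norm: one must relate the discrete $X(\Omega)$-norm of each block $e_n^0(\boldsymbol{\xi})$ to $\|V_A e_n^0(\boldsymbol{\xi})\|_\infty$ and then take the maximum over $n$, using $\tnormtight[\infty]{\mathbf{e}^0(\boldsymbol{\xi})} = \max_n\|V_A e_n^0(\boldsymbol{\xi})\|_\infty$. Under the convention that $\delta$ denotes the constant vector each of whose blocks realizes the threshold (so that $\tnormtight[\infty]{\delta}$ is meaningful), this node-wise estimate gives $\tnormtight[\infty]{\mathbf{e}^0(\boldsymbol{\xi})} \le \tnormtight[\infty]{\delta}$.

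Finally I would substitute this initial bound into the $(k+1)$-fold contraction to obtain $\tnormtight[\infty]{\mathbf{e}^{k+1}(\boldsymbol{\xi})} \le C^{k+1}\tnormtight[\infty]{\delta}$, and since the $n$-th block $u(T_n;\boldsymbol{\xi}) - \mathrm{U}_n^{k+1}(\boldsymbol{\xi})$ is controlled by the full weighted norm, the claim follows. I expect the only genuinely delicate point to be the norm-compatibility step in the previous paragraph: justifying that the spatial $X(\Omega)$ threshold $\delta$ controls the $V_A$-weighted $\infty$-norm \emph{uniformly} in $\boldsymbol{\xi}$, which implicitly requires a bound on the conditioning of the eigenvector matrix $V_A$ of $A$ that is independent of the parameter $\boldsymbol{\xi}$.
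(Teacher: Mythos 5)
Your proposal follows essentially the same route as the paper's own proof: apply the one-step contraction of Theorem \ref{linearthm}, iterate it $k+1$ times, and bound the initial error $\tnormtight[\infty]{\mathbf{e}^{0}(\boldsymbol{\xi})}$ by $\tnormtight[\infty]{\delta}$ via the KL-gPC approximation property \eqref{output-approximation}. In fact you are more careful than the paper, which states the three-inequality chain without comment; the norm-compatibility issue you flag at the end (the paper's $\delta$ is a scalar $X(\Omega)$-threshold, so writing $\tnormtight[\infty]{\delta}$ and passing from \eqref{output-approximation} to the $V_A$-weighted $\infty$-norm is an unaddressed abuse of notation) is a real gap in the paper's argument, not in yours.
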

\begin{proof}
    \begin{align*}	\tnormtight[\infty]{u(T_n;\boldsymbol{\xi})-\mathrm{U}^{k+1}_n(\boldsymbol{\xi})}
		&\le  \max_{z\in \sigma(\Delta TA)}\mathcal{K}(z, J,\alpha)\tnormtight[\infty]{\mathbf{e}^k(\boldsymbol{\xi})}\\
        &\le \big(\max_{z\in \sigma(\Delta TA)}\mathcal{K}(z, J,\alpha)\big)^{k+1}\tnormtight[\infty]{\mathbf{e}^0(\boldsymbol{\xi})}\\
        &\le \big(\max_{z\in \sigma(\Delta TA)}\mathcal{K}(z, J,\alpha)\big)^{k+1}\tnormtight[\infty]{\delta}
	\end{align*}
\end{proof}

\subsection{The nonlinear systems}

For nonlinear system:
\begin{equation}
		\left\{ 
		\begin{aligned}
			u'(t,\boldsymbol{\xi})&= f(u(t,\boldsymbol{\xi}),t,\boldsymbol{\xi}),
			\\
			u(0,\boldsymbol{\xi})&= u_0(\boldsymbol{\xi}),
		\end{aligned}
		\right.
		\label{eq:non_ODE}
	\end{equation}
where $f: \mathbb{R}^{N_x}\times \mathbb{R}^{+}\to \mathbb{R}^{N_x}$. Following the linear case treatment, the $\mathcal{F}$-propagator operates on system \eqref{eq:non_ODE}, whereas the $\mathcal{G}$-propagator acts on the modified problem:
\begin{equation}
		\left\{ 
		\begin{aligned}
			u'(t,\boldsymbol{\xi})&= f(u(t,\boldsymbol{\xi}),t,\boldsymbol{\xi}),
			\\
			u(0,\boldsymbol{\xi})&= \alpha u(T,\boldsymbol{\xi}),
		\end{aligned}
		\right.
		\label{eq:non_WODE}
	\end{equation}
\begin{assumption}\label{assump}
For the function $f\big(u(t,\boldsymbol{\xi}),t,\boldsymbol{\xi}\big)$ appearing in \eqref{eq:non_ODE} suppose there exists a constant $L > 0$ such that the following one-sided Lipschitz condition holds:
\begin{equation}\label{eq:Lip}
\big\langle f\big(u_1(t,\boldsymbol{\xi}), t,\boldsymbol{\xi}\big) - f\big(u_2(t,\boldsymbol{\xi}), t,\boldsymbol{\xi}\big), u_1(t,\boldsymbol{\xi}) - u_2(t,\boldsymbol{\xi})\big \rangle \leq -L \|u_1(t,\boldsymbol{\xi}) - u_2(t,\boldsymbol{\xi})\|_2,
\end{equation}
where $\langle \cdot \rangle$ denotes the Euclid inner product. Moreover, we assume that the $\mathcal{F}$-propagator is an exact solver.
\end{assumption}

\begin{lemma}\label{lem3}
Under Assumption \ref{assump}, we get for the $\mathcal{F}$-propagator
\begin{equation}\label{eq:Lip_F}
\|\mathcal{F}(T_n, T_{n+1}, u_1(\boldsymbol{\xi}) - \mathcal{F}(T_n, T_{n+1}, u_2(\boldsymbol{\xi})\|_2 \leq e^{-L \Delta T} \|u_1(\boldsymbol{\xi}) - u_2(\boldsymbol{\xi})\|_2 \quad \forall u_1, u_2 \in \mathbb{R}^{N_x}.
\end{equation}
For the $\mathcal{G}$-propagator it holds that
\begin{equation}\label{eq:Lip_G}
\|\mathcal{G}(T_n, T_{n+1}, u_1(\boldsymbol{\xi}) - \mathcal{G}(T_n, T_{n+1}, u_2(\boldsymbol{\xi})\|_2 \leq \frac{1}{1 + \Delta t L} \|u_1(\boldsymbol{\xi} )- u_2(\boldsymbol{\xi})\|_2 \quad \forall u_1, u_2 \in \mathbb{R}^{N_x}.
\end{equation}
\end{lemma}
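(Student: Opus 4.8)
The plan is to treat the two propagators separately, in each case exploiting the dissipative structure encoded in Assumption \ref{assump}. The common mechanism is that the one-sided Lipschitz condition makes both the exact flow and its backward-Euler discretization contractive; I would work with this condition in the form $\langle f(u_1)-f(u_2),\,u_1-u_2\rangle \leq -L\|u_1-u_2\|_2^2$, the squared norm on the right being what the subsequent energy estimates require for dimensional consistency (the linear norm in the displayed \eqref{eq:Lip} I read as a typo).

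For the $\mathcal{F}$-propagator I would use that $\mathcal{F}$ is an exact solver of \eqref{eq:non_ODE}. Let $u_1(t)$, $u_2(t)$ be the exact trajectories launched from $u_1(\boldsymbol{\xi})$, $u_2(\boldsymbol{\xi})$ at $t=T_n$, and set $w(t):=u_1(t)-u_2(t)$. Differentiating the energy gives $\frac{d}{dt}\|w\|_2^2 = 2\langle w,\,f(u_1)-f(u_2)\rangle \leq -2L\|w\|_2^2$, and Gr\"onwall's inequality over $[T_n,T_{n+1}]$ yields $\|w(T_{n+1})\|_2 \leq e^{-L\Delta T}\|w(T_n)\|_2$, which is exactly \eqref{eq:Lip_F}.

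For the $\mathcal{G}$-propagator I would use that one backward-Euler step defines $v_i:=\mathcal{G}(T_n,T_{n+1},u_i)$ implicitly by $v_i = u_i + \Delta T\,f(v_i)$. Subtracting the two relations gives $v_1-v_2 = (u_1-u_2) + \Delta T\,(f(v_1)-f(v_2))$; pairing with $v_1-v_2$ and invoking the dissipativity bound evaluated at $v_1,v_2$ produces $(1+L\Delta T)\|v_1-v_2\|_2^2 \leq \langle u_1-u_2,\,v_1-v_2\rangle$. One application of Cauchy--Schwarz on the right and cancellation of a factor $\|v_1-v_2\|_2$ then deliver \eqref{eq:Lip_G}, with $\Delta T$ in place of the $\Delta t$ in the displayed statement, since $\mathcal{G}$ advances by the coarse step.

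The Gr\"onwall and Cauchy--Schwarz steps are routine and present no real difficulty. The one point that genuinely needs care is reconciling the hypothesis as written with the form the argument consumes: the right-hand side of \eqref{eq:Lip} must carry $\|u_1-u_2\|_2^2$ for the energy estimate to close, so the main obstacle is essentially bookkeeping---ensuring the constant $L$ enters with the correct power---together with justifying well-posedness of the implicit step, i.e.\ unique solvability of $v_i = u_i + \Delta T\,f(v_i)$. The latter is not an extra hypothesis but a consequence of the same one-sided Lipschitz condition, which makes the map $v\mapsto v-\Delta T f(v)$ strongly monotone and hence invertible.
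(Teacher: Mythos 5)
Your proposal is correct, but it is worth noting that the paper itself contains no actual proof of this lemma: it simply states that the scalar case is proved in \cite{M.J1} (section 5, on time-periodic parareal algorithms) and asserts that the extension to the system case $f:\mathbb{R}^{N_x}\times\mathbb{R}^{+}\to\mathbb{R}^{N_x}$ requires no essential change. Your argument supplies exactly the machinery that citation hides: the Gr\"onwall energy estimate $\frac{d}{dt}\|w\|_2^2\leq -2L\|w\|_2^2$ for the exact $\mathcal{F}$-propagator, and the monotonicity computation $(1+L\Delta T)\|v_1-v_2\|_2^2\leq\langle u_1-u_2,\,v_1-v_2\rangle$ followed by Cauchy--Schwarz for the backward-Euler $\mathcal{G}$-propagator, which is the standard route and almost certainly what the cited reference does. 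Beyond completeness, your write-up adds two things the paper's deferral glosses over: you correctly diagnose that the right-hand side of the one-sided Lipschitz condition \eqref{eq:Lip} must read $\|u_1-u_2\|_2^2$ (the unsquared norm as printed is dimensionally inconsistent and would not close the energy estimate), and that the factor $\frac{1}{1+\Delta t L}$ in \eqref{eq:Lip_G} should carry the coarse step $\Delta T$, not $\Delta t$ --- a reading confirmed by Theorem \ref{thm2}, whose contraction factor $\rho(\alpha)$ uses $\frac{1}{1+L\Delta T}$ throughout. Your observation that unique solvability of the implicit step follows from strong monotonicity of $v\mapsto v-\Delta T f(v)$, rather than needing a separate hypothesis, is also a point the paper never addresses. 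The paper's approach buys brevity at the cost of propagating these typos unexamined; yours verifies the statement and repairs it.
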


\begin{proof}
For the scalar case $f : \mathbb{R} \times \mathbb{R}^+ \to \mathbb{R}$, the proof is given in [section 5]\cite{M.J1}. Without essential change, the proof can be extended to the system case $f : \mathbb{R}^{N_x} \times \mathbb{R}^+ \to \mathbb{R}^{N_x}$ here. We therefore omit the details.
\end{proof}

\begin{theorem}\label{thm2}
Under Assumption \ref{assump}, the errors of Algorithm \ref{alg:PGC}, i.e., the parareal algorithm with diagonalization-based CGC, satisfy
\begin{equation}\label{eq:non_er}
\max_{n=1,2,\ldots,N} \|u(T_n,\boldsymbol{\xi}) - \mathrm{U}_n^{k+1}(\boldsymbol{\xi})\|_2 \leq \rho(\alpha) \max_{n=1,2,\ldots,N} \|u(T_n,\boldsymbol{\xi}) - \mathrm{U}_n^k(\boldsymbol{\xi})\|_2 \quad \forall k \geq 0
\end{equation}
with
\[
\rho(\alpha) = \max\left\{ \left| \alpha \right| \frac{1 + e^{-L \Delta T}}{L \Delta T}, \frac{e^{-L \Delta T} + \frac{1}{1 + L \Delta T}}{1 - \frac{1}{1 + L \Delta T}} \right\},
\]
provided $\frac{|\alpha|}{1 + L \Delta T} < 1$ and $N \gg 1$.
\end{theorem}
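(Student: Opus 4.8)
The plan is to reproduce the structure of the linear analysis in Theorem \ref{linearthm} (Wu's \cite[Section 3]{S.L}), replacing the scalar stability functions by the propagator contraction factors supplied by Lemma \ref{lem3}. Write $e_n^k(\boldsymbol{\xi}) := u(T_n,\boldsymbol{\xi}) - \mathrm{U}_n^k(\boldsymbol{\xi})$, and recall that, since $\mathcal{F}$ is an exact solver, the reference solution satisfies $u(T_{n+1},\boldsymbol{\xi}) = \mathcal{F}(T_n,T_{n+1},u(T_n,\boldsymbol{\xi}))$, while (by the note following Algorithm \ref{alg:PGC}) the first subinterval always integrates from $u_0$. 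Subtracting the CGC update \eqref{eq:PGC_step2} from this fixed-point relation and inserting $\pm\,\mathcal{G}(T_n,T_{n+1},u(T_n,\boldsymbol{\xi}))$ gives, for $n\ge 1$, the standard parareal error identity
\begin{align*}
e_{n+1}^{k+1} &= \big[\mathcal{G}(T_n,T_{n+1},u(T_n)) - \mathcal{G}(T_n,T_{n+1},\mathrm{U}_n^{k+1})\big] \\
&\quad + \big[\mathcal{F}(T_n,T_{n+1},u(T_n)) - \mathcal{F}(T_n,T_{n+1},\mathrm{U}_n^{k})\big] - \big[\mathcal{G}(T_n,T_{n+1},u(T_n)) - \mathcal{G}(T_n,T_{n+1},\mathrm{U}_n^{k})\big],
\end{align*}
whereas the twisted condition $\mathrm{U}_0^{k+1}=\alpha\mathrm{U}_N^{k+1}$ produces the wrap-around relation $e_1^{k+1} = \mathcal{G}(T_0,T_1,\alpha \mathrm{U}_N^{k}) - \mathcal{G}(T_0,T_1,\alpha \mathrm{U}_N^{k+1})$.

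Next I would apply Lemma \ref{lem3} termwise. Writing $\beta := \tfrac{1}{1+L\Delta T}$ for the $\mathcal{G}$-contraction (backward Euler with the coarse step $\Delta T$) and $\gamma := e^{-L\Delta T}+\beta$, the triangle inequality turns the two identities into a scalar recursion for the error norms $v_n := \|e_n^{k+1}\|_2$ and $w_n := \|e_n^{k}\|_2$:
\begin{align*}
v_{n+1} \le \beta\, v_n + \gamma\, w_n \quad (n\ge 1), \qquad v_1 \le |\alpha|\beta\,\|\mathrm{U}_N^{k+1}-\mathrm{U}_N^{k}\|_2 \le |\alpha|\beta\,(v_N + w_N).
\end{align*}
These inequalities can be written compactly as $\widetilde{\mathbf{G}}\,\mathbf{v} \le \widetilde{\mathbf{H}}\,\mathbf{w}$ (componentwise), where $\widetilde{\mathbf{G}}$ has unit diagonal, subdiagonal $-\beta$ and corner entry $-|\alpha|\beta$, and $\widetilde{\mathbf{H}}$ carries $\gamma$ on the subdiagonal and $|\alpha|\beta$ in the corner---exactly the nonnegative analogues of $\mathbf{G}$ and $\mathbf{G}-\mathbf{F}$ from Lemma \ref{lem1}.

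The core of the proof is to invert $\widetilde{\mathbf{G}}$ and estimate $\|\widetilde{\mathbf{G}}^{-1}\widetilde{\mathbf{H}}\|_\infty$. The condition $\tfrac{|\alpha|}{1+L\Delta T}=|\alpha|\beta<1$ guarantees that $\widetilde{\mathbf{G}}=I-\beta\widetilde{\mathbf{S}}$, with $\widetilde{\mathbf{S}}$ the nonnegative weighted cyclic shift satisfying $\widetilde{\mathbf{S}}^N=|\alpha|I$, is invertible with a nonnegative inverse $\widetilde{\mathbf{G}}^{-1}=\tfrac{1}{1-|\alpha|\beta^N}\sum_{j=0}^{N-1}\beta^{j}\widetilde{\mathbf{S}}^{j}$, so the inequalities propagate without reversing sign and $\max_n v_n \le \|\widetilde{\mathbf{G}}^{-1}\widetilde{\mathbf{H}}\|_\infty\max_n w_n$. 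Computing the maximal absolute row sum of $\widetilde{\mathbf{G}}^{-1}\widetilde{\mathbf{H}}$ then splits into two regimes: the generic interior rows sum the geometric series $\gamma\sum_j\beta^j$ to the classical factor $B:=\tfrac{\gamma}{1-\beta}$ (the nonlinear analogue of $\mathcal{K}_{\text{cla}}$), while the first row collects the wrap-around contribution and, using $1-\beta+\gamma = 1+e^{-L\Delta T}$, evaluates to $|\alpha|\beta(1+B)=|\alpha|\tfrac{1+e^{-L\Delta T}}{L\Delta T}$. Taking the larger of the two yields $\rho(\alpha)=\max\{|\alpha|\beta(1+B),\,B\}$, which is exactly the stated expression.

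I expect the explicit row-sum bound of $\widetilde{\mathbf{G}}^{-1}\widetilde{\mathbf{H}}$ to be the main obstacle, for two reasons. First, one must track the cyclic corrections introduced by $\widetilde{\mathbf{S}}^N=|\alpha|I$; it is the regime $N\gg 1$ that lets the factor $\tfrac{1}{1-|\alpha|\beta^N}$ and the truncated geometric tails collapse to the clean closed forms above, so the asymptotic assumption must be used substantively rather than cosmetically. Second, the wrap-around inequality for $v_1$ is the only one coupling iteration levels $k$ and $k+1$ (through $\mathrm{U}_N^{k+1}-\mathrm{U}_N^{k}=e_N^{k}-e_N^{k+1}$); keeping the estimate sharp enough to recover $|\alpha|\beta(1+B)$ rather than a looser implicit bound is the delicate point, and it is precisely where $|\alpha|\beta<1$ is needed both to keep $\widetilde{\mathbf{G}}^{-1}\ge 0$ and to keep the resulting contraction factor below one.
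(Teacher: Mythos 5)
Your proposal is correct and takes essentially the same route as the paper's proof, which consists entirely of a citation to Wu \cite[Section 4]{S.L}: the parareal error identity with the contraction factors of Lemma \ref{lem3}, the wrap-around inequality induced by the twisted condition $\mathrm{U}_0^{k+1}=\alpha\mathrm{U}_N^{k+1}$, and the row-sum estimate of the nonnegative matrix $\widetilde{\mathbf{G}}^{-1}\widetilde{\mathbf{H}}$ (with $N\gg1$ collapsing the factor $1/(1-|\alpha|\beta^{N})$ and the geometric tails) are precisely the ingredients of that cited argument, extended verbatim to carry the parameter $\boldsymbol{\xi}$. In effect, your reconstruction supplies the details the paper omits.
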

\begin{proof}
For the parameter-independent case, the proof is given in  
\cite[section 4]{S.L}. Without essential change, the proof can be extended to the parameter-dependent system case here. We therefore omit the details.
\end{proof}

\begin{re} 
If $\alpha = 0$, the quantity $\rho$ reduces to $\rho_{\text{cla}} := \frac{e^{-L \Delta T} + \frac{1}{1 + L \Delta T}}{1 - \frac{1}{1 + L \Delta T}}$, which is the result given by Gander et al. for the so-called PP-PC algorithm applied to nonlinear scalar time-periodic differential equations; see \cite{M.J1}, Theorem \ref{thm2} for more details.
\end{re}

By combining Theorem \ref{thm2} with the estimates given in (\ref{eq:non_er}) and the (\ref{output-approximation}), we can also derive the following convergence result.
\begin{theorem}
    Assume for some small $\delta > 0$. Let $u(T_n;\boldsymbol{\xi})$ be the solution to \eqref{eq:non_ODE} and $\mathrm{U}^{k+1}_{n}(\boldsymbol{\xi})$ be the $(k+1)$th parallel CGC iteration numerical approximation of  \eqref{eq:non_WODE}. There exist a constant $\rho(\alpha)$ dependent on Lipschitz constant $L$, $\Delta$, and $\alpha$ such that
	\begin{equation}
		\max_{n=1,2,\cdot,N}\|u(T_n;\boldsymbol{\xi})-\mathrm{U}^{k+1}_n(\boldsymbol{\xi})\|_2
		\le \rho^{k+1}(\alpha)\delta.
	\end{equation}
\end{theorem}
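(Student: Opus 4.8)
The plan is to mirror the argument already used for the linear-case theorem, simply replacing the eigenvalue-based contraction factor by the nonlinear contraction $\rho(\alpha)$ delivered by Theorem \ref{thm2}, and then to close the estimate by controlling the zeroth iterate through the KL--gPC approximation bound \eqref{output-approximation}. In essence the statement is the composition of two facts already in hand: a per-iteration contraction and an initial-error bound.

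First I would record that the initialization $\{\mathrm{U}^0_n(\boldsymbol{\xi})\}_{n=1}^N$ produced in Algorithm \ref{alg:KLE-PCGC} is precisely the value at $t=T_n$ of the variable-separation surrogate returned by the KL/gPC stage, whose global accuracy is guaranteed by \eqref{output-approximation}. Evaluating that bound at each coarse node $t=T_n$ and passing from the $X(\Omega)$ norm to the Euclidean norm on the spatial degrees of freedom should give
\begin{equation*}
\max_{n=1,\dots,N}\|u(T_n;\boldsymbol{\xi})-\mathrm{U}^0_n(\boldsymbol{\xi})\|_2\le \delta,
\end{equation*}
so that the improved initial guess forces the zeroth iterate error below the prescribed threshold.

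Next I would invoke Theorem \ref{thm2}: under Assumption \ref{assump} together with the restrictions $\tfrac{|\alpha|}{1+L\Delta T}<1$ and $N\gg 1$, each parallel-CGC sweep contracts the maximum node error by the factor $\rho(\alpha)$, i.e. \eqref{eq:non_er} holds for every $k\ge 0$. A straightforward induction on the iteration index then telescopes these one-step estimates into
\begin{equation*}
\max_{n=1,\dots,N}\|u(T_n;\boldsymbol{\xi})-\mathrm{U}^{k+1}_n(\boldsymbol{\xi})\|_2\le \rho(\alpha)^{k+1}\max_{n=1,\dots,N}\|u(T_n;\boldsymbol{\xi})-\mathrm{U}^0_n(\boldsymbol{\xi})\|_2,
\end{equation*}
and substituting the initial-error bound yields the claimed $\rho(\alpha)^{k+1}\delta$.

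The induction is routine, so the main obstacle is the norm-reconciliation step: the KL--gPC error \eqref{output-approximation} is stated in the space-time $X(\Omega)$ norm of the surrogate field, whereas the contraction factor in Theorem \ref{thm2} acts on the discrete Euclidean norm of the nodal vectors. I would have to argue that restricting to the coarse nodes and to the spatial variable does not inflate the constant (absorbing any norm-equivalence factor into the meaning of $\delta$), and to confirm that the hypotheses of Theorem \ref{thm2} genuinely enforce $\rho(\alpha)<1$, so that the geometric decay is meaningful rather than vacuous.
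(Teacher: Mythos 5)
Your proposal follows exactly the route the paper intends: the paper states this result as an immediate consequence of combining the per-iteration contraction \eqref{eq:non_er} from Theorem \ref{thm2} with the initial-error bound \eqref{output-approximation}, telescoped over the iterations, which is precisely your induction argument (the paper's explicit proof of the analogous linear-case theorem is the same three-line chain). In fact you are more careful than the paper, which silently identifies the $X(\Omega)$-norm surrogate error $\delta$ with the discrete Euclidean nodal error and never verifies $\rho(\alpha)<1$; flagging those reconciliation steps is a genuine improvement, not a deviation.
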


\section{Numerical results}\label{nu}
In this section, we provide three numerical examples to illustrate the applicability of the proposed KLE-PCGC method for solving time-dependent problems with random inputs. The results demonstrate that KLE-PCGC not only achieves higher computational efficiency compared to the classical parareal algorithm, but also exhibits significant speedup over diagonalization-based parallel CGC when applied to such problems. In all experiments, the standard parareal and diagonalization-based parallel CGC start from a random initial guess and terminate when the error falls below $10^{-10}$.

Let $u_n(\boldsymbol{\xi})$ denote the reference solution, $u_n^k(\boldsymbol{\xi})$ denote the solution obtained by the standard parareal method, $\tilde{u}_n^k(\boldsymbol{\xi})$ denote the solution obtained by the diagonalization-based parallel CGC method, and $\mathrm{U}_n^k$ denote the solution from the KLE-PCGC method. The errors in the mean sense are defined respectively as:

\begin{equation}
\bm{err}_k = (e_1^k, e_2^k, \cdots, e_{N}^k)^{\top}, \quad
e_n^k = \frac{1}{S} \sum_{i=1}^{S}\| u_n(\boldsymbol{\xi}_i) - u_n^k(\boldsymbol{\xi}_i) \|_{\infty}, \quad n = 1, 2, \cdots, N,
\end{equation}

\begin{equation}
\bar{\bm{err}}_k = (\bar{e}_1^k, \bar{e}_2^k, \cdots, \bar{e}_{N}^k)^{\top}, \quad
\bar{e}_n^k = \frac{1}{S} \sum_{i=1}^{S}\| u_n(\boldsymbol{\xi}_i) - \tilde{u}_n^k(\boldsymbol{\xi}_i) \|_{\infty}, \quad n = 1, 2, \cdots, N,
\end{equation}
and

\begin{equation}
\tilde{\bm{err}}_k = (\tilde{e}_1^k, \tilde{e}_2^k, \cdots, \tilde{e}_{N}^k)^{\top}, \quad
\tilde{e}_n^k = \frac{1}{S} \sum_{i=1}^{S} \| u_n(\boldsymbol{\xi}_i) - \mathrm{U}_n^k(\boldsymbol{\xi}_i) \|_{\infty}, \quad n = 1, 2, \cdots, N.
\end{equation}

\subsection{Advection-diffusion equation}\label {ADE}
\par We now consider a 2D advection-diffusion equation given by:
\begin{align*}
	\begin{cases}
		\frac{\partial u(x,t;\boldsymbol{\xi})}{\partial t}-\nabla(a(x,t;\boldsymbol{\xi})\nabla u(x,t;\boldsymbol{\xi}))+b(x,t
        )\cdot\nabla u(x,t;\boldsymbol{\xi})=f(x,t;\boldsymbol{\xi}),\quad &(x,t)\in \Omega\times[0,T],\\
		u(x,t;\boldsymbol{\xi})=0,\quad& (x,t)\in \partial\Omega\times[0,T],\\
		u(x,0;\boldsymbol{\xi})=u_0,\quad &x\in \Omega,
	\end{cases}
\end{align*}
where the coefficient is given by $a(x,t;\boldsymbol{\xi})=0.5(2+\cos(\pi\xi^1)^2)$, $b(x,t)=0.1\sin(\frac{\pi}{2})(x_2,x_1)^T$ with $\xi^1 \sim U [2,6]$.
The spatial domain is $\Omega=(0,1)^2$, and final time $T=1$. The source term $f(x,t;\boldsymbol{\xi})$ is defined as:
\begin{align*}
    f(x,t;\boldsymbol{\xi})=&\exp(-t)\big(-\sin(\pi x_1)\sin(2\pi x_2)+0.5\pi^2(2+\cos(\pi\xi^1)^2)\sin(\pi x_1)\sin(2\pi x_2)+\\&0.05\pi\sin(\pi/2)x_2\cos(\pi x_1)\sin(2\pi x_2)+0.1\pi\sin(\pi/2)x_1\sin(\pi x_1)\cos(2\pi x_2)\big),
\end{align*}
with the initial value $u_0(x)=\sin(\pi x_1)\cdot sin(2\pi x_2)$.

For spatial discretization, P1-Lagrange finite elements are used with grid size $\Delta x_1 = \Delta x_2 = h$. For time discretization, the backward Euler method is applied with a coarse time step fixed at $\Delta T = 1 / 24$, and the spatial mesh size is fixed at $h = 1/20$. The $\mathcal{F}$ propagator also uses the backward Euler method. The reference solution $\{ u_n\}^{N}_{n=1}$ (at the coarse time points) is computed sequentially using a uniform fine time step $\Delta t = \Delta T / 50$. The training set for generating initial values at the coarse time points has size $|\Xi_t|=10$, and the tolerance for the Karhunen-Lo\`{e}ve expansion is $\varepsilon^{KL}=10^{-10}$.

\begin{figure}[htbp]
	\centering
	\includegraphics[height=6cm,width=6cm]{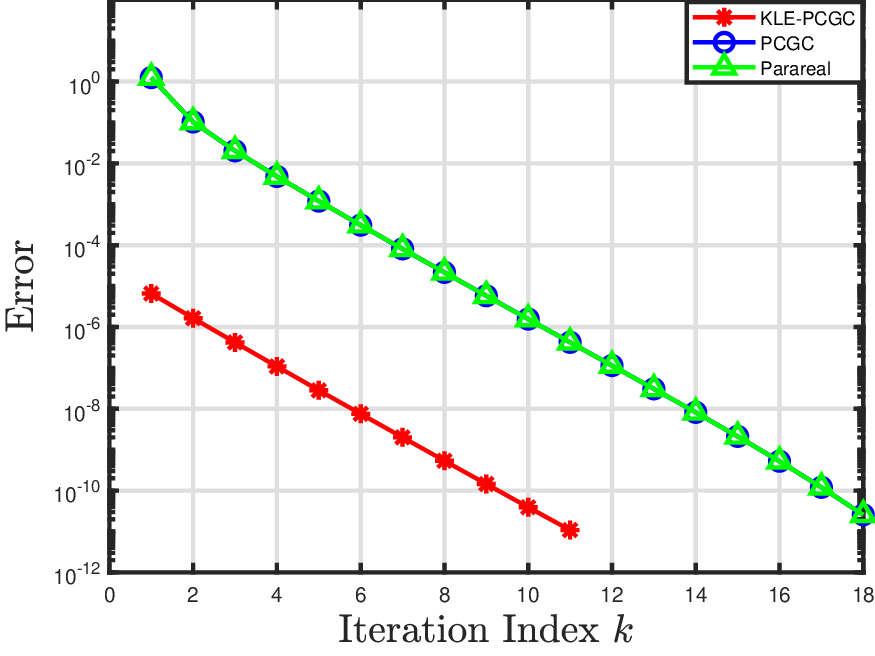}
	
	\caption{Comparison of the mean error corresponding to the parareal, parallel CGC and the  KLE-PCGC algorithm.}\label{ex3}
\end{figure}

In this experiment, the parameter 
$\alpha=0.1$ was chosen, and numerical tests with 1000 random parameter samples were conducted to assess the computational efficiency of the proposed KLE-PCGC method. Fig. \ref{ex3} illustrates the evolution of the error as a function of the iteration count for the three algorithms. It is evident that the error of the KLE-PCGC method decays at a notably faster rate compared to both the standard Parareal and the diagonalization based parallel CGC algorithms. From the initial iterations, KLE-PCGC exhibits a sharper error reduction and attains a lower error level more rapidly as the iteration index $k$ increases. In particular, for the same number of iterations, the error of KLE-PCGC remains substantially smaller than those of the other two methods. The KLE-PCGC algorithm satisfies the preset stopping tolerance by $k=10$, whereas the standard parareal and diagonalization-based parallel CGC require 18 iterations to converge. These results confirm that KLE-PCGC achieves more effective numerical error reduction and yields a more efficient approximation to the reference solution. Similarly, the error shows that KLE-PCGC, parareal, and the diagonalization based parallel CGC algorithms share an identical convergence rate, confirming the theoretical results.

Fig. \ref{ex3_1} shows a comparison of the spatial distribution of the mean reference solution \( u_N(\boldsymbol{\xi}) \), the solution \( U_N^k(\boldsymbol{\xi}) \) by KLE-PCGC method, the solution $\tilde{u}_N^k(\boldsymbol{\xi})$ by diagonalization-based parallel CGC method and the solution \( u_N^k(\boldsymbol{\xi}) \) by parareal method at the final time $t=T$. It can be observed that all solutions are in high agreement in terms of both global morphology and local features, with both the KLE-PCGC, diagonalization-based parallel CGC and parareal solutions capturing the spatial structure of the reference solution well.

\subsection{Burgers Equation}
We now consider the one-dimensional nonlinear Burgers equation:
\begin{align*}
	\begin{cases}
		\frac{\partial u(x,t)}{\partial t} + u(x,t) \cdot \frac{\partial u(x,t)}{\partial x} = \frac{\epsilon}{50} \cdot \frac{\partial^2 u(x,t)}{\partial x^2},\quad &(x,t)\in \Omega\times[0,T],\\
		u(x,t)=0,\quad& (x,t)\in \partial\Omega\times[0,T],\\
		u(x,0)=u_0,\quad &x\in \Omega,
	\end{cases}
\end{align*}
where $\epsilon \sim U[1,3]$. The spatial domain is $\Omega=(0,1)$ and the final time is $T=2$.

\begin{figure}[H]
	\centering
	\includegraphics[height=6cm,width=6cm]{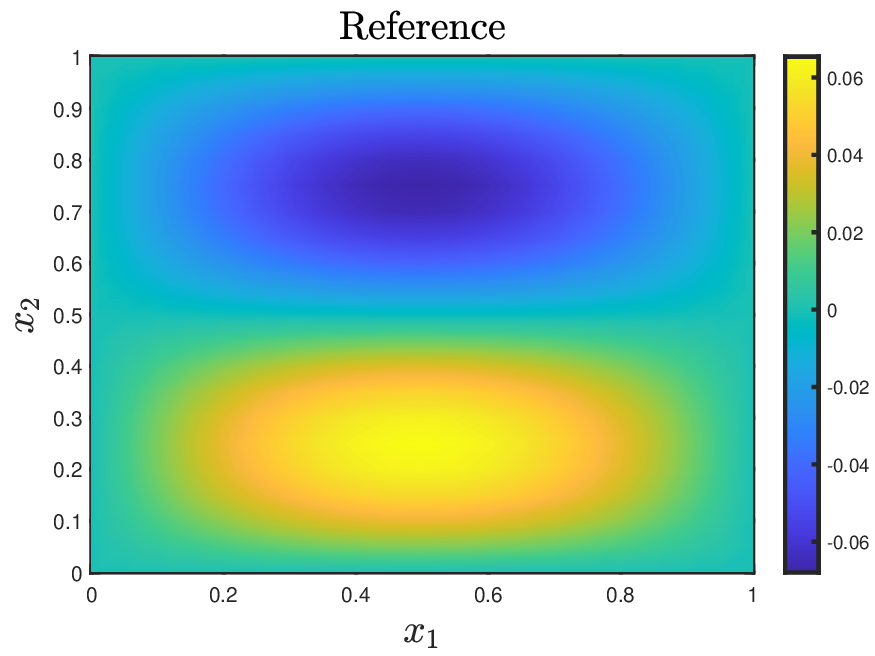}
	\qquad\qquad
	\includegraphics[height=6cm,width=6cm]{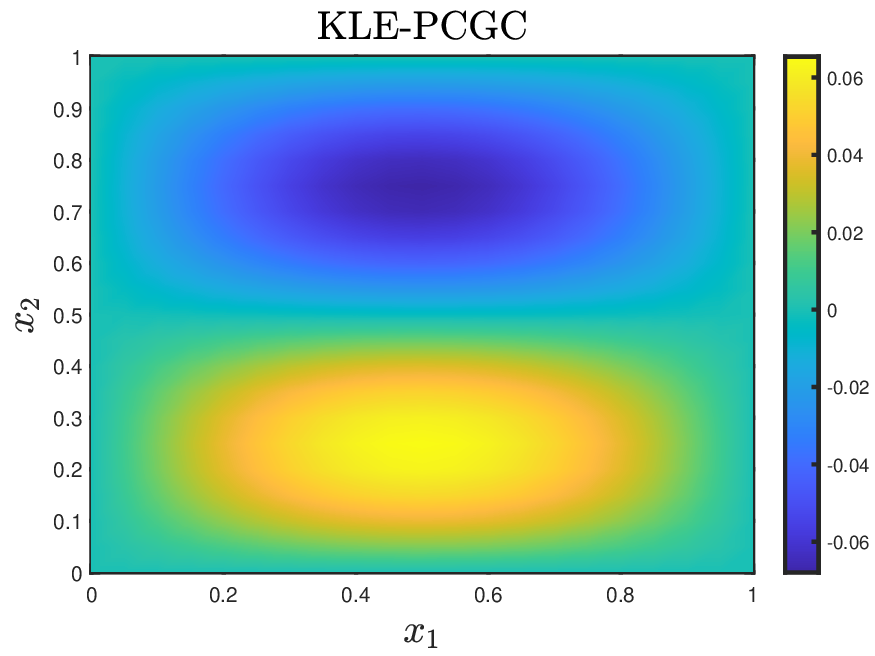}\\
    \includegraphics[height=6cm,width=6cm]{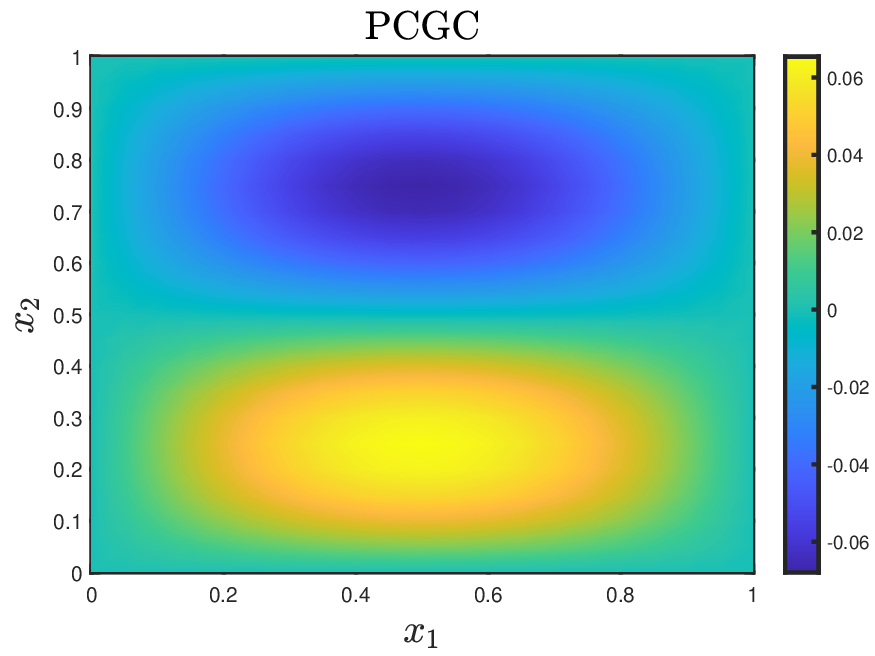}
	\qquad\qquad
	\includegraphics[height=6cm,width=6cm]{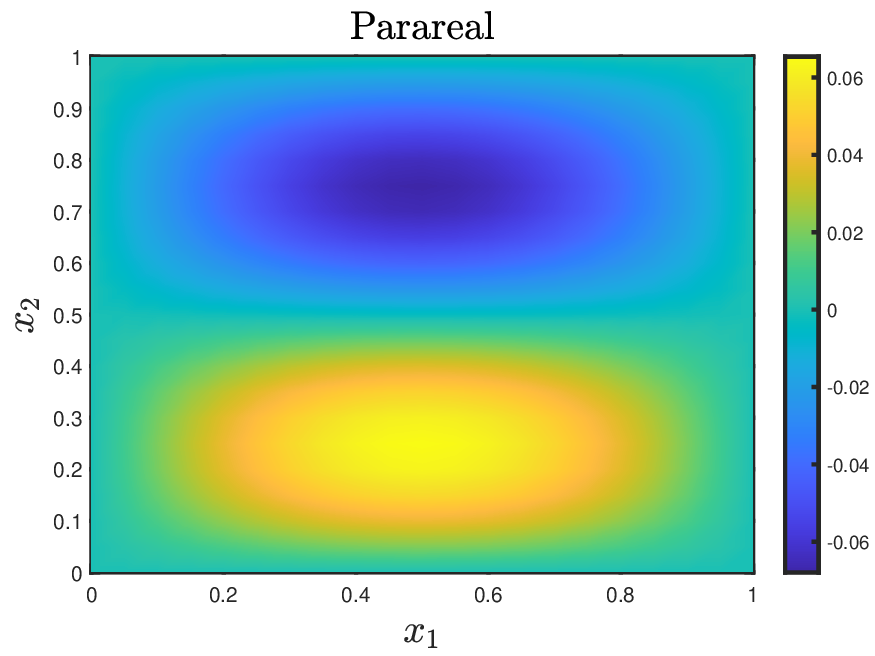}
	\caption{The mean reference solution $u_N(\boldsymbol{\xi})$ (left), the KLE-PCGC solution $\mathrm{U}_N^k(\boldsymbol{\xi})$ (middle), and the Parareal solution $u_N^k(\boldsymbol{\xi})$ (right) at the final time $t=T$.}\label{ex3_1}
\end{figure}

For spatial discretization, an upwind scheme is used for the convective term and central differencing for the diffusive term, with grid size $\Delta x = 1/100$. For time discretization, the backward Euler method is employed with a coarse time step fixed at $\Delta T = 2 / 25$; the $\mathcal{F}$ propagator also uses backward Euler. The reference solution $\{ u_n\}^{N}_{n=1}$ (at the coarse time points) is computed sequentially using a uniform fine time step $\Delta t = \Delta T / 40$. The training set for generating initial values at the coarse time points has size $|\Xi_t|=36$, and the KL tolerance is $\varepsilon^{KL}=10^{-10}$.

\begin{figure}[htbp]
	\centering

    \includegraphics[height=6cm,width=6cm]{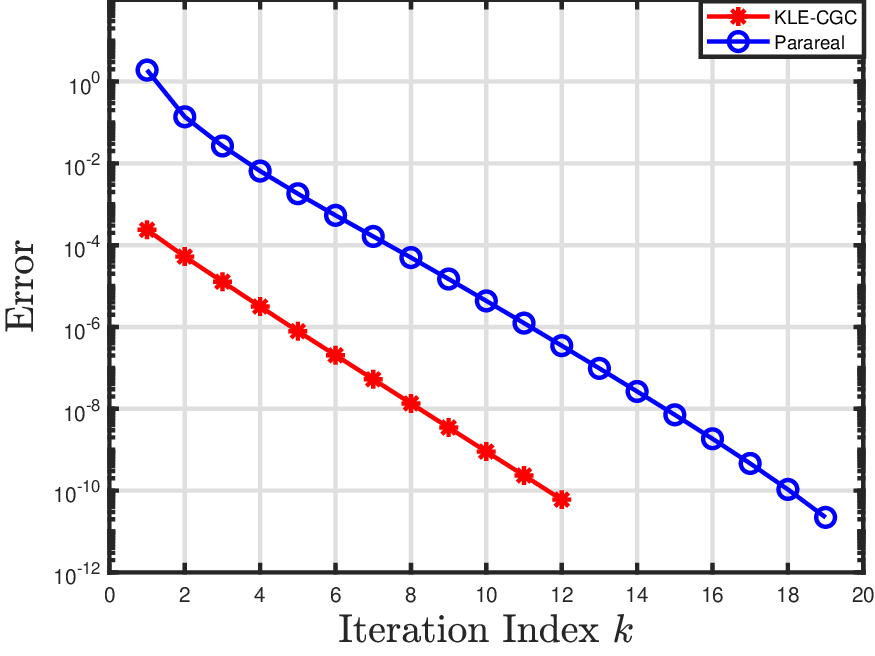}
	\qquad\qquad
    \includegraphics[height=6cm,width=6cm]{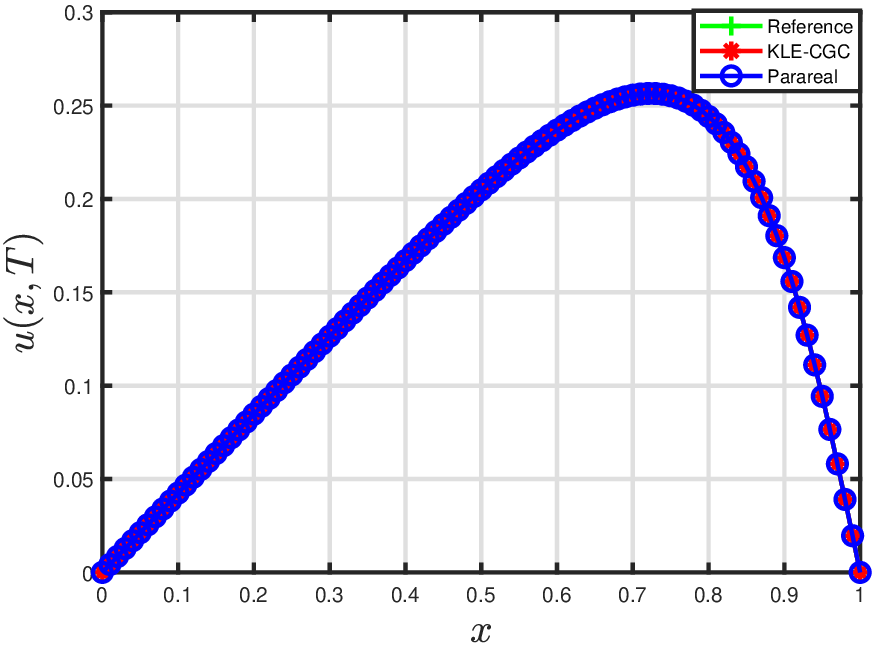}

	\caption{Comparison of the mean error between the parareal algorithm and the KLE-CGC algorithm (left) and the mean reference solution $u_N$, KLE-CGC solution $\mathrm{U}_N^k$, and parareal solution $u_N^k$ at the final time $t=T$ (right).}\label{b}
\end{figure}

In this experiment, we choose the KL expansion based CGC (KLE-CGC) as the main technique to compare with the standard parareal. To demonstrate the applicability of the KLE-CGC method to parameterized nonlinear problems, 1000 random parameter samples were tested. Fig. \ref{b} (left) shows the evolution of the mean error at the coarse time points for both algorithms versus the iteration count $k$. The error of KLE-CGC is consistently lower than that of Parareal, indicating that KLE-CGC provides a more accurate approximation to the reference solution at every iteration. Particularly in the initial iterations ($k=1,2$), the error of KLE-CGC is significantly lower, demonstrating its superior initial approximation capability, which benefits from its KLE-based reduced-order representation and covariance-guided coarse-grid correction. As iterations proceed, the errors of both methods decrease, but KLE-CGC maintains a lower error level, indicating better numerical stability even with coupled nonlinear and diffusive terms. Similarly, the error shows identical convergence rates for KLE-CGC and parareal; this agreement is consistent with the theoretical results.

Fig. \ref{b} (rigt) compares the solutions at the final time $t=T$. The reference solution shows the true state of the system at $T=2$. The KLE-CGC solution agrees well with both the reference and Parareal solutions, exhibiting excellent accuracy in capturing peak values and waveform details. This indicates that the KLE-CGC method also performs well in regions of strong nonlinearity.

\subsection{Allen-Cahn Equation}
We now consider the one-dimensional Allen-Cahn equation with a nonlinear term $f(u) = u^3 - u$:
\begin{equation*}
\begin{cases}
\frac{\partial u(x,t)}{\partial t} - \epsilon u_{xx} + f(u) = 0, & (x,t) \in (-1,1) \times (0,T), \\
u(x,0) = 0.53x + 0.47 \sin(-1.5\pi x), & x \in (-1,1), \\
u(-1,t) = -1, \, u(1,t) = 1, & t \in (0,T).
\end{cases}
\end{equation*}
Here, $\epsilon$ follows a truncated Gaussian distribution with original mean 0.53, original standard deviation 0.15, and truncation interval $[0.06,1]$, denoted $\epsilon \sim \mathcal{TN}(0.53, 0.15^2; 0.06, 1)$. The final time is $T=30$.

For spatial discretization, central differencing is used with grid size $\Delta x = 1/128$. For time discretization, the backward Euler method is applied with a coarse time step fixed at $\Delta T = 1$; the $\mathcal{F}$ propagator also uses backward Euler. The reference solution $\{ u_n\}^{N}_{n=1}$ (at the coarse time points) is computed sequentially using a uniform fine time step $\Delta t = \Delta T / 48$. The training set for generating initial values at the coarse time points has size $|\Xi_t|=10$, and the KL tolerance is $\varepsilon^{KL}=10^{-10}$.

An important feature of this equation is its Lyapunov energy functional,
\begin{equation*}
E(t) = \int_{-1}^{1} \left( \frac{\epsilon}{2} |\nabla u|^2 + F(u) \right) dx, \quad \text{where} \quad F(u) := \frac{(u^2 - 1)^2}{4},
\end{equation*}
which satisfies the decay property \( E(t) \leq E(s) \) for any \( t \geq s > 0 \).

\begin{figure}[htbp]
	\centering
	\includegraphics[height=6cm,width=6cm]{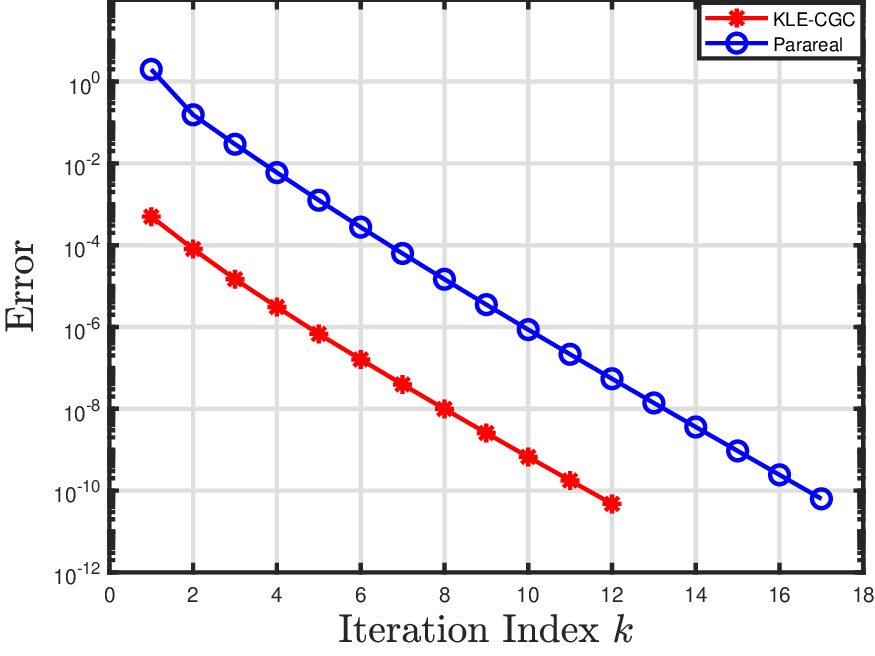}
	\qquad\qquad
	\includegraphics[height=6cm,width=6cm]{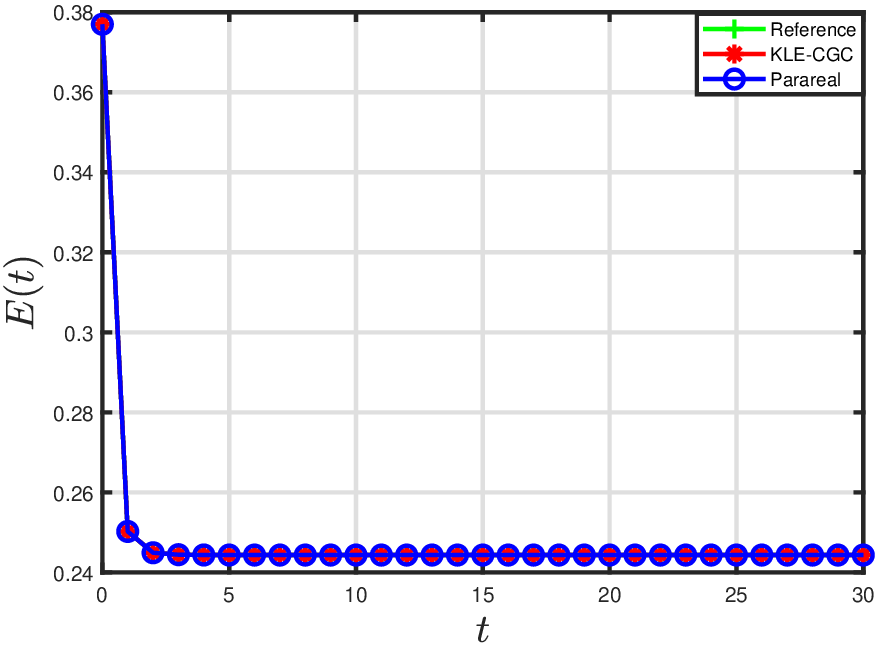}
	\caption{Comparison of the mean error (left) and the evolution of the mean energy $E(t)$ (right) between the parareal algorithm and the KLE-CGC algorithm.}\label{ac}
\end{figure}

In this experiment, KLE-CGC is also chosen as the main technology. Similarly, to demonstrate the high efficiency and accuracy of the KLE-CGC method, 1000 random parameter samples were tested. Fig. \ref{ac} (left) shows that KLE-CGC significantly outperforms Parareal at all iteration steps, maintaining consistently lower errors. Notably, at the first iteration ($k=1$), KLE-CGC already achieves high accuracy, while Parareal requires multiple iterations to improve gradually. This suggests that the KLE-based reduced-order representation more effectively captures the main dynamics of the system. As iterations proceed, both errors decrease, but KLE-CGC maintains its advantage, indicating greater robustness for strongly nonlinear systems.

 The energy $E(t)$ (right) decreases monotonically over time, consistent with theoretical expectations. The energy evolution of the KLE-CGC solution highly agrees with that of the reference and Parareal solutions, indicating that it not only accurately captures the functional form of the solution but also preserves the physical property (energy decay) of the system. This demonstrates that the method is particularly suitable for systems with an energy structure, maintains physical constraints, and performs robustly in long-time integration.

\section{Conclusion}\label{con}
A principal contribution of this work is the development of a novel hybrid parallel algorithm, termed KLE-PCGC, designed to significantly enhance the efficiency of traditional parallel-in-time methods when applied to problems with stochastic initial conditions. This methodology represents a strategic integration of parallel time integration and uncertainty quantification techniques. By employing the Karhunen-Lo\`{e}ve expansion for the low-dimensional parameterization of random fields and constructing a generalized Polynomial Chaos spectral surrogate model, the approach enables rapid and accurate prediction of the solution behavior. The central innovation lies in leveraging this prediction to provide high-quality initial guesses for the coarse grid, thereby directly addressing the fundamental bottleneck of slow convergence in the standard parareal algorithm caused by poor initial randomization.

Beyond the algorithmic construction, a rigorous convergence analysis is established, mathematically proving that the proposed KLE-PCGC framework retains the theoretical convergence rate of the standard parareal algorithm, which solidifies its theoretical foundation. Extensive numerical experiments comprehensively validate the superior performance of the method: compared to conventional approaches, KLE-PCGC achieves rapid convergence with considerably fewer iterations, ensuring high numerical accuracy while substantially improving parallel computational efficiency. Furthermore, the algorithm demonstrates remarkable generality, having been successfully applied to nonlinear systems and evolution equations possessing specific energy structures, thus providing a powerful and reliable tool for the high-precision and efficient parallel simulation of a broad class of complex dynamical systems.

In summary, this work successfully constructs a novel numerical framework that is theoretically sound, computationally efficient, and widely applicable by deeply embedding advanced dimension-reduction and surrogate modeling techniques from uncertainty quantification into a parallel-in-time architecture. It offers a systematic solution for the rapid simulation of parametrized dynamical systems.


\begin{thebibliography}{1}

\bibitem{J.L.L} J. L. Lions, Y. Maday and G. Turinici, A Parareal in time discretization of PDEs, C.R. Acad. Sci. Paris,  332: 661--668, 2001.

\bibitem{M.J} M. J. Gander and S. Vandewalle, Analysis of the parareal time-parallel time-integration
method, SIAM J. Sci. Comput., 29: 556--578, 2007.

\bibitem{S.L1} S. L. Wu, Convergence analysis of some second-order parareal algorithms, IMA J. Numer. Anal., 35: 1315--1341, 2015.

\bibitem{T.R.M} T.R. Mathew, M. Sarkis, and C.E. Schaerer, Analysis of block parareal preconditioners for parabolic optimal controal problems, SIAM J. Sci. Comput., 32: 1180--1200, 2010.

\bibitem{S.L2} S.L. Wu, Convergence analysis of some second-order parareal algorithms, IMA J. Numer. Anal., 35: 1315--1341, 2015.

\bibitem{S.L3} S.L. Wu and T. Zhou, Convergence analysis for three parareal solvers, SIAM J. Sci. Comput., 37: A970--A992, 2015.

\bibitem{S.L} S. L. Wu, Toward parallel coarse grid correction for the parareal algorithm, SIAM J. Sci. Comput., 40: A1446--A1472, 2018.

\bibitem{Ref2} S. L. Wu, An efficient parareal algorithm for a class of time-dependent problems with fractional Laplacian, Appl. Math. Comput., 307: 329--341, 2017.

\bibitem{Penk} K. Pentland,  M. Tamborrino, and D. A. L. C Samaddar,  Stochastic parareal: an application of probabilistic methods to time-parallelization, SIAM J. Sci. Comput., 45(3): S82-S102, 2023.

\bibitem{M.L.M} M. L. Minion, R. Speck, M. Bolten, M. Emmett, and D. Ruprecht, Interweaving PFASST
and parallel multigrid, SIAM J. Sci. Comput., 37: S244--S263, 2015.

\bibitem{R.D.F} R. D. Falgout, S. Friedhoff, Tz. V. Kolev, S. P. MacLachlan, and J. B. Schroder,
Parallel time integration with multigrid, SIAM J. Sci. Comput., 36: C635--C661, 2014.

\bibitem{V.A.D} V. A. Dobrev, Tz. Kolev, N. A. Petersson, and J. B. Schroder, Two-level convergence theory for multigrid reduction in time (MGRIT), SIAM J. Sci. Comput., 39:  S501--S527, 2017.

\bibitem{Y.M} Y. Maday and O. Mula, An adaptive parareal algorithm, J. Comput. Appl. Math., 377:
112915, 2020.

\bibitem{C.S} C. Schwab and R.A. Todor, Karhunen-Lo\`{e}ve approximation of random fields by gen eralized fast multipole methods, J. Comput. Phys., 217: 100–122, 2006.

\bibitem{D.X1} D. Xiu and G. Karniadakis, The Wiener-Askey polynomial chaos for stochastic differential equations, SIAM J. Sci. Comput., 24 (2002), pp. 619–644

\bibitem{J.S} J. Son and Y. Du, Comparison of intrusive and nonintrusive polynomial chaos expansion-based approaches for high dimensional parametric uncertainty quantification and propagation, Comput. Chem. Eng., 134: 106685 2020.

\bibitem{R.G} R. G. Ghanem and P. D. Spanos, Stochastic Finite Elements: A Spectral Approach, SpringerVerlag, 1991.

\bibitem{D.X2} D. Xiu and G.E. Karniadakis, Modeling uncertainty in steady-state diffusion problems via generalized polynomial chaos, Comput. Methods Appl. Mech. Eng., 191(43):4927-4948, 2002.


\bibitem{J.M} J. M. Reynolds-Barredo, D. E. Newman, R. Sanchez, D. Samaddar, L. A. Berry, and W. R. Elwasif, Mechanisms for the convergence of time-parallelized, parareal turbulent
plasma simulations, J. Comput. Phys., 231: 7851--7867, 2012.


\bibitem{J.M1} J. M. Reynolds-Barredo, D. E. Newman, and R. Sanchez, An analytic model for the convergence of turbulent simulations time-parallelized via the parareal algorithm, J. Comput.
Phys., 255: 293--315, 2013.

\bibitem{Y.E} Y. Maday and E. M. Ronquist, Parallelization in time through tensorproduct space-time
solvers, C. R. Math. Acad. Sci. Paris, 346: 113--118, 2008.




\bibitem{F.K} F. Kwok and B. Ong, Schwarz waveform relaxation with adaptive pipelining, SIAM J. Sci.
Comput., 41: A339--A364, 2019.

\bibitem{Smolyak_63}
S.~Smolyak, Quadrature and interpolation formulas for tensor products of
  certain classes of functions, Dokl. Akad. Nauk SSSR, 148:1042--1045, 1963.



\bibitem{M.J2} 
M. J. Gander and L. Halpern, Time parallelization for nonlinear problems based on diagonalization, in Domain Decomposition Methods in Science and Engineering XXIII, Lect.
Notes Comput. Sci. Eng. 116, Springer, Cham, 163--170, 2017.



\bibitem{M.J1} M. J. Gander, J. L. Jiang, B. Song, and H. Zhang, Analysis of two parareal algorithms for
time-periodic problems, SIAM J. Sci. Comput., 35: A2393--A2415, 2013.
\end{thebibliography}
\end{document}